\def\Char{\operatorname{char}}
\def\A{\operatorname{A}}
\def\D{\operatorname{D}}
\def\E{\operatorname{E}}
\def\GL{\operatorname{GL}}
\def\SL{\operatorname{SL}}
\def\a{\alpha}
\def\b{\beta}
\def\g{\gamma}
\def\d{\delta}
\def\z{\zeta}
\def\l{\lambda}
\def\s{\sigma}
\def\Z{{\Bbb Z}}
\def\dynkinal#1#2#3#4{\vcenter{\vbox{\vfill
\hbox{$#1#2\cdots#3#4$}\vfill}}}
\def\dynkindfour#1#2#3#4{\vcenter{\vbox{\vfill
\hbox{$#1$}\nointerlineskip\vskip -1pt
\hbox{$\phantom{#1}#3#4$\hfil}\nointerlineskip\vskip -1pt
\hbox{$#2$}\vfill}}}
\def\dynkindeight#1#2#3#4#5#6#7#8{\vcenter{\vbox{\vfill
\hbox{$#1$}\nointerlineskip\vskip -1pt
\hbox{$\phantom{#1}#3#4#5#6#7#8$\hfil}\nointerlineskip\vskip -1pt
\hbox{$#2$}\vfill}}}
\def\dynkineeight#1#2#3#4#5#6#7#8{\vcenter{\vbox{\vfill
\hbox{$#1#3#4#5#6#7#8$}\nointerlineskip\vskip 3pt
\hbox{$\phantom{#1}\phantom{#3}#2$\hfil}\vfill}}}
\newtheorem{lemma}{Лемма}
\newtheorem{theorem}{Теорема}
\newtheorem*{theorem*}{Теорема}
\begin{document}

\begin{center}
{\bf  Орбиты векторов некоторых представлений. II}

\vspace{2mm}

И.М. Певзнер \footnote[1]{Настоящая работа выполнена при
содействии проекта РФФИ 19-01-00297}

\end{center}

\vspace{1cm}

Пусть $\Phi$~--- система корней одинаковой длины и $K$~---
произвольное поле. Далее, обозначим через $\d$  максимальный
корень $\Phi$ и положим $\Phi_0 = \{\a\in\Phi; \d\perp\a\}$, $G_0
= G_{\text{\rm sc}}(\Phi_0, K)$ и $V_1 = \langle e_\a; \angle(\a,
\d) = \pi/3\rangle$, где $e_\a$~--- элементарные корневые
элементы. В настоящей серии статей рассмотрены орбиты
действия $G_0$ на $V_1$.

Такое действие изучалось во множестве работ. Прежде всего это,
разумеется, случай $\Phi=\E_8$~--- тогда получается $56$-мерное
минимальное микровесовое представление группы типа $\E_7$.
Остальные случаи исследуются меньше, однако тоже встречаются
достаточно часто. Наряду с изучением собственно представления
$G_0$ в $V_1$, это может помочь и при исследовании представления
всей группы $G_{\text{\rm sc}}(\Phi, K)$ и соответствующей алгебры
Ли.

Настоящая статья является продолжением работы [18]. В ней были доказаны некоторые общие результаты и
разобраны случаи $\Phi=\E_l$ при $\Char K\neq 2$. В настоящей работе будут
рассмотрены случаи $\Phi=\A_l$ и $\D_l$ при $\Char K\neq 2$.

Кроме того, для случая $\Phi=\D_l$ возникла необходимость, как довольно часто бывает при изучении алгебр Ли и групп Шевалле, вычислить несколько структурных констант $N_{\a\b}$. В [4, 32] приведен метод их вычисления для систем корней одной длины и положительных базисов Шевалле. Он позволяет индуктивно находить $N_{\a\b}$. А именно, сводит их сперва, постепенно уменьшая высоту корня $\a$, к вычислению нескольких $N_{\a_i\b}$, а те, в свою очередь, сводит к нескольким $N_{\a_i\a_j}$. К сожалению, для корней большой высоты это вычисление может оказаться довольно длительным. В настоящей работе доказывается следующая теорема, позволяющая избавиться от второй части вышеописанных расчетов.

\begin{theorem*}
Пусть $\Phi$~--- система корней одной длины, в соответствующей алгебре Ли $V(\Phi)$ выбран положительный базис Шевалле, $\a_i$~--- простой корень, а $\b,\b+\a_i$~--- положительные корни. Тогда $N_{\a_i\b} = -1 \Leftrightarrow$ $i$ больше номера любого простого корня, входящего в разложение $\b$.
\end{theorem*}

\begin{center}
{\bf \S1. Основные обозначения}
\end{center}

Пусть $K$~--- произвольное поле, $\Phi$~--- система корней одной длины, $V = V(\Phi)$~---
соответствующая алгебра Ли,  а $G=G_{\text{\rm sc}}(\Phi, K)$~--- соответствующая односвязная
группа.

Как известно, в $V$ существует базис Шевалле $\{e_\a,\a\in\Phi;
h_\a,\a\in\Pi\}$, где $\Pi$~-- фундаментальная система корней. При
этом все $h_\a$ из подалгебры Картана; $[e_\a,e_{-\a}] = h_\a$;
$[h_\a,e_\b] = A_{\a\b}e_\b$, где $A_{\a\b} =
2(\a,\b)/(\a,\a)\in\Z$~--- числа Картана; $[e_\a,e_\b] =
N_{\a\b}e_{\a+\b}$ при $\a+\b\in\Phi$ и $[e_\a,e_\b]=0$ при
$\a+\b\notin\Phi$ и $\b\neq -\a$, где $N_{\a\b} = \pm1$~---
структурные константы. Коэффициент в разложении вектора $x\in V$
по этому базису при $e_\a$ обозначим $x^\a$, а соответствующий
элемент из подалгебры Картана обозначим $x^h$; тогда $x =
\sum_{\a\in\Phi} x^\a e_\a + x^h$. 

Далее, в группе $G$ выделяются элементарные корневые элементы
$x_\a(a)$, $\a\in\Phi, a\in K$ и $X_\a = \langle x_\a(a); a\in
K\rangle$~--- элементарные корневые подгруппы. В работе будут
использоваться формулы для действия $x_\a(a)$ на базисе Шевалле.
Они перечислены, например, в [34]. Нам понадобятся следующие
равенства: $x_\a (a) e_\b = e_\b$ при $\angle(\a,\b)<2\pi/3$,
$x_\a (a) e_\b = e_\b + N_{\a\b}  a e_{\a+\b}$ при
$\angle(\a,\b)=2\pi/3$, $x_\a (a) e_{-\a} = e_{-\a} + a h_\a - a^2
e_\a$ и $x_\a (a) h_\b = h_\b - A_{\b\a} a e_\a$; мы будем ими
пользоваться без дополнительных ссылок.

Обозначим через $\d$ максимальный корень системы $\Phi$.
Экстраспециальным унипотентным радикалом называется подгруппа
$U_\d = \langle X_\a; \angle(\a,\d)<\pi/2\rangle$. Подробнее об
этом радикале говорится, например, в [8, 16].

Пусть $\a\in\Phi$~--- некоторый корень. Разобьем все корни из
$\Phi$ на пять классов в зависимости от их расположения
относительно корня $\a$: $\Phi_2(\a) = \{\a\}$, $\Phi_1(\a) =
\{\b; \angle(\b,\a) = \pi/3\}$, $\Phi_0(\a) = \{\b; \angle(\b,\a)
= \pi/2\}$, $\Phi_{-1}(\a) = \{\b; \angle(\b,\a) = 2\pi/3\}$,
$\Phi_{-2}(\a) = \{-\a\}$. Другими словами, $\b$ принадлежит
$\Phi_i(\a)$ тогда и только тогда, когда скалярное произведение
$\b$ и $\a$ равно $i/2$. Простейшие свойства этого разбиения
приведены в [16]. Часто нас будет интересовать случай $\a=\d$; для
краткости, аргумент у $\Phi_i(\d)$ будем опускать. В этих
обозначениях экстраспециальный радикал $U_{\d} = \langle X_{\a};
\a\in\Phi_2\cup\Phi_1\rangle$.

\begin{center}
{\bf \S2. Структурные константы}
\end{center}

В отличие от статьи [18], в настоящей работе нам понадобится
посчитать несколько структурных констант. В начале мы кратко
изложим метод, описанный в [4], потом упростим и
ускорим его, и в конце параграфа посчитаем требуемые константы.

1. Структурные константы $N_{\a\b}$, $\a,\b\in\Phi$, как уже
отмечалось, определяются равенством $[e_\a,e_\b] =
N_{\a\b}e_{\a+\b}$. По определению базиса Шевалле все $N_{\a\b}$
являются целыми числами; $N_{\a\b}=0$ тогда и только тогда, когда
$\a+\b\notin\Phi$. Для систем корней одинаковой длины при этом
$N_{\a\b}=\pm1$ при $\a+\b\in\Phi$.

Далее, существуют так называемые положительные базисы Шевалле, в
которых $N_{\a\b}>0$ для всех экстра-специальных пар (см. [4, 32]).
Для систем, в которых все корни имеют одинаковую длину, это
условие означает в точности, что $N_{\a_i\b}=1$ каждый раз, как
$\a_i+\b\in\Phi^+$ обладает тем свойством, что если $\a_j+\g =
\a_i+\b$ для какого-то простого корня $\a_j$ и какого-то
положительного корня $\g$, то $j>i$. Мы всегда полагаем, что
выбранный нами базис Шевалле положителен.

Для структурных констант, как известно (см., например, [32, 34]), выполняются
следующие свойства:

\begin{enumerate}
\item[N1] $N_{\a\b} = N_{-\b,-\a} = -N_{-\a,-\b} = -N_{\b\a}$;

\item[N2] $\frac{N_{\a\b}}{(\g,\g)} = \frac{N_{\b\g}}{(\a,\a)} =
\frac{N_{\g\a}}{(\b,\b)}$, если $\a + \b + \g = 0$;

\item[N3] $\frac{N_{\a\b}N_{\g\z}}{(\a+\b,\a+\b)} +
\frac{N_{\b\g}N_{\a\z}}{(\b+\g,\b+\g)} +
\frac{N_{\g\a}N_{\b\z}}{(\g+\a,\g+\a)} = 0$, если $\a + \b + \g +
\z = 0$.
\end{enumerate}

Для систем с одинаковыми длинами корней формулы N2 и N3 можно
упростить:

\begin{enumerate}
\item[N2'] $N_{\a\b} = N_{\b\g} = N_{\g\a}$, если $\a + \b + \g =
0$;

\item[N3'] $N_{\a\b}N_{\g\z} + N_{\b\g}N_{\a\z} + N_{\g\a}N_{\b\z}
= 0$, если $\a + \b + \g + \z = 0$.
\end{enumerate}

На самом деле, в последней формуле одно слагаемое точно будет
нулевым, поэтому она эквивалентна части уравнения 2-цикла

\begin{enumerate}
\item[N3''] $N_{\b\g}N_{\a,\b+\g} = N_{\a+\b,\g}N_{\a\b}$.
\end{enumerate}

Свойство положительности базиса Шевалле можно записать следующим
образом:

\begin{enumerate}
\item[N4] $N_{\a_i\b}=1$, если $\a_i$~--- простой корень с
наименьшим индексом, который можно вычесть из корня $\a_i+\b$.
\end{enumerate}

Здесь и всюду далее в этом параграфе под "можно вычесть один
корень из другого"{} подразумевается, что их разность является
корнем. Как известно, это равносильно тому, что скалярное произведение этих корней равно $1/2$.

2. В работе [4], следуя работе Титса [32], приведен индуктивный способ
вычисления $N_{\pm\a_i\b}$. Однако, как мы покажем в этом
параграфе, эти константы можно найти сразу, без длительных
подсчетов. Сперва докажем вспомогательную лемму.

\begin{lemma}
Пусть $\g\in\Phi^+$~--- некоторый положительный корень и $\a_j$~---
простой корень с наименьшим индексом, который можно вычесть из
$\g$. Далее, пусть $\a_i$~--- другой простой корень, который можно
вычесть из $\g$, и $\g \neq \a_i+\a_j$. Тогда либо $\a_j$ является
простым корнем с наименьшим индексом, который можно вычесть из
$\g-\a_i$, либо $\Phi=\E_8$, $\g=\dynkineeight23454321$, $j=2$ и
$i=3$.
\end{lemma}
\begin{proof}
Так как $\a_j$ вычитается из $\g$, то $(\g,\a_j)=1/2$. Аналогично
$(\g,\a_i)=1/2$, следовательно $(\g,\a_i+\a_j)=1$. Если бы
$\a_i+\a_j$ являлось бы корнем, то $\g=\a_i+\a_j$, что
противоречит условию. Значит $\a_i+\a_j$ не корень и,
следовательно, $(\a_i,\a_j)=0$ (то есть $\a_i$ и $\a_j$ не
соседние простые корни). Поэтому $(\g-\a_i,\a_j)=1/2$ и $\a_j$
вычитается из $\g-\a_i$.

Далее, напомним хорошо известное и совсем простое утверждение: для
произвольного корня $\g$ в его разложении на простые корни
удвоенный коэффициент при любом простом корне $\a$ или равен сумме
коэффициентов при всех соседних с ним в диаграмме Дынкина простых
корнях, или отличается от нее на $1$. При этом если они равны, то
$(\a,\g) = 0$; если удвоенный коэффициент больше на $1$, то
$(\a,\g) = 1/2$ и $\a$ можно вычесть из $\g$ (или $\g=\a$ и $(\a,\g)=1$); если
удвоенный коэффициент меньше на $1$, то $(\a,\g) = -1/2$ и $\a$ можно прибавить к $\g$ (или $\g=-\a$ и $(\a,\g)=-1$). Это сразу следует из того, что
скалярный квадрат $\a$ равен $1$, скалярное произведение $\a$ на
соседний корень равно $1/2$, а на не соседний~--- $0$.

Предположим, что существует простой корень с меньшим индексом, чем
$j$, который тоже можно вычесть из $\g-\a_i$; пусть это $\a_k$,
$k<j$. Так как $\a_k$ не вычиталось из $\g$, то, в силу
вышесказанного, это означает, что простые корни $\a_k$ и $\a_i$
соседние. Таким образом, $k<j<i$, корни $\a_k$ и $\a_i$ соседние,
а $\a_j$ и $\a_i$~--- нет. В системах корней $\Phi=\A_l$ соседние
простые корни всегда имеют соседние номера, и сразу получается
противоречие. Если $\Phi=\D_l$, то единственные соседние корни с
не соседними номерами, это $\a_1$ и $\a_3$. Но тогда $j=2$ и корни
$\a_2$ и $\a_3$~--- тоже соседние, что противоречит ранее
сказанному.

Пусть $\Phi=\E_l$. Аналогичными рассуждениями получаем, что
единственный возможный случай~--- это $k=1$, $j=2$ и $i=3$. Тогда
в разложении $\g$ коэффициент при $\a_1$ может быть равен $1$ или
$2$. Предположим, что он равен $1$. Тогда, так как $(\g,\a_1)=0$,
коэффициент при $\a_3$ равен $2$. Так как $\a_3$ вычитается из
$\g$, то коэффициент при $\a_4$ тоже должен равняться $2$ (как мы
уже упоминали, удвоенный коэффициент при $\a_3$ должен равняться
сумме коэффициентов при $\a_1$ и $\a_4$ плюс $1$). Но тогда
коэффициент при $\a_2$ должен равняться $1$ и $\a_2$ не будет
вычитаться из $\g$~--- противоречие.

Осталось рассмотреть случай, когда в разложении $\g$ коэффициент
при $\a_1$ равен $2$. Тогда, аналогично, коэффициент при $\a_3$
должен равняться $4$, при $\a_4$~--- $5$, а при $\a_2$~--- $3$. В
принципе, то что такой корень существует всего один, можно
убедиться и непосредственным перебором, но мы в этом убедимся,
используя все то же утверждение. Как мы видим (и как уже
выводилось из условия), из корня $\g$ можно вычесть $\a_2$ и
$\a_3$. Поэтому удвоенный коэффициент при $\a_4$ должен быть на
$1$ меньше суммы коэффициентов при соседних корнях, откуда
коэффициент при $\a_5$ должен равняться $4$. Далее, корень $\a_4$
можно к $\g$ прибавить, а можно, после вычитания $\a_2$ и $\a_3$,
и вычесть. Это означает, что удвоенный коэффициент при $\a_5$
должен равняться сумме коэффициентов при соседних корнях, откуда
коэффициент при $\a_6$ должен равняться $3$. Аналогичными
рассуждениями получаем, что коэффициент при $\a_7$ равен $2$, а
при $\a_8$~--- $1$, что и требовалось.
\end{proof}

\begin{theorem}
Пусть $\a_i\in\Pi$ и $\b,\b+\a_i\in\Phi^+$. Тогда $N_{\a_i\b} = -1
\Leftrightarrow$ $i$ больше номера любого простого корня,
входящего в разложение $\b$.
\end{theorem}
\begin{proof}
Прежде всего отметим, что если $\b=\a_j$~--- простой корень, то
утверждение теоремы выполняется. В самом деле, если $i<j$, то
$N_{\a_i\a_j} = 1$ по N4, а если $i>j$, то $N_{\a_i\a_j} = -
N_{\a_j\a_i}$ по N1, что, по N4, равно $-1$. Поэтому можно
считать, что $\b$ не является простым корнем. Далее, пусть
$\a_j$~--- простой корень с наименьшим индексом, который можно
вычесть из корня $\a_i+\b$. Если $j=i$, то, по свойству N4,
$N_{\a_i\b} = 1$ и утверждение теоремы выполняется. Пусть $j\neq
i$. Если в свойство N3'' подставить $\a = \a_i$, $\b = \b - \a_j$
и $\g = \a_j$, то получится равенство $N_{\b-\a_j,\a_j}
N_{\a_i\b} = N_{\a_i+\b-\a_j,\a_j} N_{\a_i,\b - \a_j}$. Отсюда
$N_{\a_i\b} = N_{\a_i+\b-\a_j,\a_j} N_{\a_i,\b - \a_j}
N_{\b-\a_j,\a_j}$, что, по свойству N1, равно
$N_{\a_j,\a_i+\b-\a_j} N_{\a_i,\b - \a_j} N_{\a_j,\b-\a_j}$. По
определению $\a_j$ и N4 первый сомножитель равен $1$.

Рассмотрим последний сомножитель, $N_{\a_j,\b-\a_j}$. Заметим, что
если в предыдущую лемму подставить $\g = \b+\a_i$, то ее условие
будет выполняться. Это означает, что, либо $\a_j$ является простым
корнем с наименьшим индексом, который можно вычесть из $\b$, либо
$\Phi=\E_8$, $\g = \b+\a_i = \dynkineeight23454321$, $j=2$ и
$i=3$. Первый из этих случаев, по N4, означает, что
$N_{\a_j,\b-\a_j} = 1$, откуда $N_{\a_i\b} = N_{\a_i,\b - \a_j}$.
Таким образом, мы можем спокойно из корня $\b$ вычитать простой
корень с наименьшим индексом, который из него вычитается~---
структурная константа при этом не меняется. Эта процедура
остановится либо когда мы придем к вышеуказанному исключительному
случаю, либо к случаю простого корня $\b=\a_j$, либо к случаю,
когда $\a_j=\a_i$. Отметим также, что при переходе от $\b$ к
$\b-a_j$ условие из формулировки теоремы не меняется: если $i$
было больше номера любого простого корня, входящего в разложение
$\b$, то это верно и для $\b-\a_j$, и наоборот. Поэтому во втором
и третьем случае теорема выполняется.

Заметим, что $N_{\a_j,\b-\a_j}$ при $\b = \dynkineeight23354321$ и $j=2$
также равен $1$. В самом деле, для этого достаточно к этой
структурной константе применить только что полученный
результат (то есть подставить в первоначальные расуждения $\b = \dynkineeight22354321$ и $i=2$)~--- то что мы не попадем в исключительный случай,
очевидно. Это означает, что исключительный случай можно было и не
исключать, так как в нем тоже $N_{\a_i\b} = N_{\a_i,\b - \a_j}$.
Поэтому утверждение теоремы всегда верно.
\end{proof}

3. Перейдем к подсчету произведений структурных констант, которые нам
понадобятся в дальнейшем. У нас будут два отдельных подсчета для
систем $\Phi=\D_4$ и $\Phi=\D_l, l>4$. В рассуждениях мы будем
использовать свойства N1, N2', N4 и результат теоремы без
подробных записей, а N3'' придется расписывать поподробнее, в силу
неоднозначности выбора корней и порядка использования этого
свойства. Отметим, что N4, разумеется, является частным случаем теоремы, но где
это возможно, для простоты, мы будем ссылаться именно на N4.
Начнем со случая $\Phi=\D_4$.

\begin{lemma} Пусть $\Phi=\D_4$, и положим $\l=\dynkindfour0010$,
$\rho=\dynkindfour0111$, $\s=\dynkindfour1011$,
$\tau=\dynkindfour1110$. Тогда
\begin{enumerate}
\item $N_{\l-\d,\d}N_{\l-\d,\rho}N_{\s-\d,\d}N_{\s-\d,\tau} = 1$;

\item $N_{\l-\d,\d}N_{\l-\d,\s}N_{\rho-\d,\d}N_{\rho-\d,\tau} =
1$.
\end{enumerate}
\end{lemma}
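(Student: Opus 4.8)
The plan is to verify (1) and (2) directly from the structure-constant identities N1, N2', N3'' and N4 together with the theorem just proved. The feature of $\D_4$ that makes this convenient is that the highest root is $\d=\a_1+\a_2+2\a_3+\a_4$, so that
\[
\d-\l=\a_1+\a_2+\a_3+\a_4,\qquad \d-\rho=\a_1+\a_3,\qquad \d-\s=\a_2+\a_3,\qquad \d-\tau=\a_3+\a_4,
\]
and moreover $\rho+\a_1=\s+\a_2=\tau+\a_4=\d-\l$. One first checks that every sum entering the constants is a root --- for instance $\l-\d+\rho=-\a_1$, $\l-\d+\s=-\a_2$, $\s-\d+\tau=\a_1$, $\rho-\d+\tau=\a_2$ --- so that all the $N$'s involved are defined.

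The main device is a pairing trick. Group the four factors of (1) as $N_{\l-\d,\d}N_{\l-\d,\rho}$ and $N_{\s-\d,\d}N_{\s-\d,\tau}$, and those of (2) as $N_{\l-\d,\d}N_{\l-\d,\s}$ and $N_{\rho-\d,\d}N_{\rho-\d,\tau}$. In each of the four pairs the factors share their first argument $\mu$, while the second arguments are $\d$ and some $\nu\in\{\rho,\s,\tau\}$. Apply N3'' to the splitting $\d=\nu+(\d-\nu)$ --- that is, with $\a=\mu$, $\b=\nu$, $\g=\d-\nu$ --- to get $N_{\nu,\d-\nu}N_{\mu,\d}=N_{\mu+\nu,\d-\nu}N_{\mu,\nu}$; as all the $N$'s are $\pm1$ this rearranges to
\[
N_{\mu,\d}\,N_{\mu,\nu}=N_{\mu+\nu,\,\d-\nu}\cdot N_{\nu,\,\d-\nu}.
\]
Here $\mu+\nu$ is a sign times a simple root ($-\a_1$, $-\a_2$, $\a_1$, $\a_2$ in the four pairs) and $\d-\nu=\a_j+\a_3$ for the leaf $\a_j\in\{\a_1,\a_2,\a_4\}$ attached to $\nu$; so it suffices to show that each of the two factors on the right is $+1$. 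Then every pair equals $+1$, and (1) and (2) follow.

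That $N_{\mu+\nu,\,\d-\nu}=+1$: when $\mu+\nu=\a_1$, resp.\ $\a_2$, this is immediate from N4 ($\a_1$, resp.\ $\a_2$, being the least-index simple root subtractable from $\s$, resp.\ $\rho$); when $\mu+\nu=-\a_1$, resp.\ $-\a_2$, it follows by N2' and N1 from $N_{\a_1,\a_3}=1$, resp.\ $N_{\a_2,\a_3}=1$. For the second factor, apply N3'' once more, now to $\a_j+\a_3$ written in the order for which $\nu+\a_j$ is again a root --- this is why $\rho+\a_1=\s+\a_2=\tau+\a_4=\d-\l$ is useful, and why for $\nu=\tau$ one must split $\d-\tau$ as $\a_4+\a_3$ rather than $\a_3+\a_4$ (because $\tau+\a_3\notin\Phi$). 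This expresses $N_{\nu,\a_j+\a_3}$ as a product of $N_{\a_j,\a_3}$, $N_{\d-\l,\a_3}$ and $N_{\nu,\a_j}$. By N1 and the theorem $N_{\d-\l,\a_3}=-N_{\a_3,\d-\l}=-1$, since $3$ is not the largest simple-root index occurring in $\a_3+(\d-\l)=\d$; and $N_{\nu,\a_j}=-N_{\a_j,\nu}$ with $N_{\a_1,\rho}=N_{\a_2,\s}=1$ but $N_{\a_4,\tau}=-1$ --- this last sign, where N4 gives nothing and $\a_4$ \emph{is} the largest index occurring in $\tau+\a_4=\d-\l$, is exactly where the theorem is indispensable. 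In all three cases the signs cancel, giving $N_{\nu,\a_j+\a_3}=+1$, and the proof is complete.

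The step I expect to cost the most care is the sign bookkeeping through the chain of N1-flips, together with the requirement, at every use of N3'', to split $\d$ (or $\d-\nu$) in the one order that keeps the intermediate sum a root: a careless order inserts a vanishing factor and yields a vacuous identity. Conceptually the triality of $\D_4$ that swaps the leaves $\a_1\leftrightarrow\a_2$ also swaps (1) and (2), so the two computations are mirror images of one another; but since triality acts on the Chevalley basis only up to signs, it is cleaner to run both reductions explicitly as above. The same scheme will then also produce the $\D_l$, $l>4$, identities needed afterwards for the comparison of structure constants.
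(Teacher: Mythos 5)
Your proof is correct, and it is organized differently from the paper's. The paper evaluates each of the six constants $N_{\l-\d,\d}$, $N_{\l-\d,\rho}$, $N_{\s-\d,\d}$, $N_{\s-\d,\tau}$, $N_{\l-\d,\s}$, $N_{\rho-\d,\d}$, $N_{\rho-\d,\tau}$ one at a time, reducing each via N1 and N2${}'$ (plus one use of N3${}''$ for $N_{\s-\d,\d}$ and $N_{\rho-\d,\d}$) to a constant covered by N4 or Theorem~1, and finds that every single factor equals $-1$, whence the products of four are $+1$. You instead never compute an individual factor: you pair the terms sharing a first index, apply N3${}''$ to the splitting $\d=\nu+(\d-\nu)$ to rewrite each pair as $N_{\mu+\nu,\,\d-\nu}\,N_{\nu,\,\d-\nu}$ with $\mu+\nu=\pm\a_1$ or $\pm\a_2$, and show both of these are $+1$. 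I checked your sign bookkeeping, including the second layer of N3${}''$ used for $N_{\nu,\a_j+\a_3}$ and the implicit validity condition for N3${}''$ (that $\a+\g$ is never a root in any of your applications --- it is $\pm(\a_2+\a_4)$, $\pm(\a_1+\a_4)$, $\a_2-\a_4$, $\a_1-\a_4$, or $\nu+\a_3$, none of which lie in $\D_4$), and it all comes out right. The paper's route has the side benefit of producing the individual signs, which reappear as the explicit $\pm$ entries of the matrices $z_1,z_2,z_3$ in \S5; your route makes the cancellation structural, treats parts (1) and (2) uniformly, and transfers more directly to the $\D_l$, $l>4$, case. One small caution: in two places you state the criterion of Theorem~1 in terms of the largest index occurring in $\a_i+\b$ rather than in $\b$; the two readings agree in every instance you invoke (there $i$ does not occur in $\b$), but the theorem as stated refers to the decomposition of $\b$.
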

\begin{proof}
$N_{\l-\d,\d} \stackrel{N2'}{=} N_{-\l,\l-\d} \stackrel{N1}{=}
-N_{\l,\d-\l}$. Так как $\l+(\d-\l)=\d$~--- максимальный корень, а
$\l=\a_3$~--- единственный простой корень, который из него
вычитается, то по N4 это выражение равно $-1$.

Далее, $N_{\l-\d,\rho} \stackrel{N1}{=} -N_{\rho,\l-\d}
\stackrel{N2'}{=} -N_{\d-\l-\rho,\rho}$. Так как $\d-\l-\rho =
\a_1$, то по N4 это выражение снова равно $-1$.

Считаем третью константу: $N_{\s-\d,\d} \stackrel{N2'}{=}
N_{-\s,\s-\d} \stackrel{N1}{=} N_{\d-\s,\s}$. Заметим, что $\d-\s
= \a_2 + \a_3$, то есть $N_{\d-\s,\s} = N_{\a_2+\a_3,\s}$.
Подставим в N3'' $\a=\a_3$, $\b=\a_2$ и $\g=\s$. Тогда получим
$N_{\a_2\s}N_{\a_3,\a_2+\s} = N_{\a_3+\a_2,\s}N_{\a_3\a_2}$. При
этом $N_{\a_2\s} = 1$ по теореме, $N_{\a_3,\a_2+\s} = 1$ по
свойству N4, а $N_{\a_3\a_2} = -1$ снова по теореме. Поэтому
третья константа тоже получается равна $-1$.

Четвертая константа: $N_{\s-\d,\tau} \stackrel{N2'}{=}
N_{\d-\s-\tau,\s-\d} \stackrel{N1}{=} -N_{\s+\tau-\d,\d-\s}$. Так
как $\s+\tau-\d=\a_1$, то по N4 это выражение снова равно $-1$.
Таким образом, все четыре константы в левой части первого пункта
равны $-1$, а их произведение, соответственно, $1$. Это завершает
доказательство первого пункта.

Далее, $N_{\l-\d,\s} \stackrel{N1}{=} -N_{\s,\l-\d}
\stackrel{N2'}{=} -N_{\d-\l-\s,\s}$. Так как $\d-\l-\s = \a_2$, то
это выражение есть $-N_{\a_2\s}$, что по теореме, как мы уже
говорили, равно $-1$.

Затем, $N_{\rho-\d,\d} \stackrel{N2'}{=} N_{-\rho,\rho-\d}
\stackrel{N1}{=} N_{\d-\rho,\rho}$. Заметим, что $\d-\rho = \a_1 +
\a_3$, то есть $N_{\d-\rho,\rho} = N_{\a_1+\a_3,\rho}$. Подставим
в N3'' $\a=\a_3$, $\b=\a_1$ и $\g=\rho$. Тогда получим
$N_{\a_1\rho}N_{\a_3,\a_1+\rho} =
N_{\a_3+\a_1,\rho}N_{\a_3\a_1}$. При этом $N_{\a_1\rho} = 1$ по
свойству N4, $N_{\a_3,\a_1+\rho} = 1$ тоже по свойству N4, а
$N_{\a_3\a_1} = -1$ по теореме. Поэтому эта константа также
получается равна $-1$.

Наконец, $N_{\rho-\d,\tau} \stackrel{N2'}{=}
N_{\d-\rho-\tau,\rho-\d} \stackrel{N1}{=}
-N_{\rho+\tau-\d,\d-\rho}$. Так как $\rho+\tau-\d=\a_2$, то по
теореме это выражение снова равно $-1$. Таким образом, все четыре
константы в левой части второго пункта также равны $-1$, а их
произведение, соответственно, $1$. Это завершает доказательство
второго пункта.
\end{proof}

Отметим, что пункты 1 и 2 этой леммы получаются перестановкой
выбранных корней. Более того, на самом деле для случая $\Phi=\D_4$
для любых четырех попарно ортогональных корней, образующих угол
$\pi/3$ с корнем $\d$, аналогичное произведение будет равно $1$.
Однако для общего случая $\Phi=\D_l$ это не так, знак произведения
меняется в зависимости от выбора корней. Посчитаем этот знак для
одной из четверок:

\begin{lemma} Пусть $\Phi=\D_l$, $l>4$, и положим $\l=\dynkindeight000\dots 0010$,
$\rho=\dynkindeight112\dots2210$, $\s=\dynkindeight000\dots0111$,
$\tau=\dynkindeight112\dots2111$. Тогда
$N_{\l-\d,\d}N_{\l-\d,\rho}N_{\s-\d,\d}N_{\s-\d,\tau} = 1$.
\end{lemma}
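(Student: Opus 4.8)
I would follow the proof of the previous lemma, evaluating each of the four structure constants separately and checking that their product is $1$.

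First set up coordinates. In the labelling used here ($\alpha_1,\alpha_2$ at the ends of the fork, $\alpha_3$ the branch node, $\alpha_3-\alpha_4-\cdots-\alpha_l$ the arm), realize $\D_l$ by $\{\pm e_i\pm e_j\}$ with $\alpha_1=-e_1-e_2$, $\alpha_2=e_1-e_2$ and $\alpha_k=e_{k-1}-e_k$ for $3\le k\le l$; then $\delta=-e_{l-1}-e_l$, and from the four pictures $\lambda=e_{l-2}-e_{l-1}=\alpha_{l-1}$, $\rho=-e_{l-2}-e_{l-1}$, $\sigma=e_{l-3}-e_l=\alpha_{l-2}+\alpha_{l-1}+\alpha_l$, $\tau=-e_{l-3}-e_l$, so that $\lambda-\delta=e_{l-2}+e_l$ and $\sigma-\delta=e_{l-3}+e_{l-1}$. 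The crucial point is that every root in the four constants, and every root produced by the reductions below, is supported on $e_{l-3},\dots,e_l$: the whole computation sits inside the terminal $\D_4$, and the only simple roots that appear are $\alpha_{l-2},\alpha_{l-1},\alpha_l$ (occasionally $\alpha_{l-3}$; for $l=5$ also $\alpha_1,\alpha_2$, which I would handle separately and which affect no sign). A routine inspection of which $\mu-\alpha_j$ are positive roots then yields the two combinatorial facts I need: (a) among all simple roots, only $\alpha_{l-1}$ can be subtracted from $\delta$, so $N_{\alpha_{l-1},\,\delta-\alpha_{l-1}}=1$ by N4; (b) for each of the small positive roots of the terminal $\D_4$ that occur, the least-indexed simple root that can be subtracted has index $\le l-2$, whence $N_{\alpha_{l-2},\,\cdot}=1$ (and $N_{\alpha_{l-2},\alpha_{l-1}}=N_{\alpha_{l-1},\alpha_l}=1$) by N4, while $N_{\alpha_l,\,\cdot}=-1$ in the relevant cases, since a smaller-indexed simple root also peels off (the latter is an instance of the Theorem, but could be re-obtained by the present scheme).

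Three of the factors are then routine: using only N1 and N2', I rotate each pair until one entry becomes $\pm\alpha_j$. This gives $N_{\lambda-\delta,\delta}=-N_{\alpha_{l-1},\,\delta-\alpha_{l-1}}=-1$; then $N_{\lambda-\delta,\rho}=-N_{\alpha_l,\rho}=+1$, since $\alpha_l+\rho=\delta-\alpha_{l-1}$ and hence $N_{\alpha_l,\rho}=-1$ by (b); and $N_{\sigma-\delta,\tau}=-N_{\alpha_l,\,\delta-\sigma}=+1$ by the same mechanism, since $\alpha_l+(\delta-\sigma)=\tau$. The laborious factor is $N_{\sigma-\delta,\delta}$ --- and this is exactly where the case $l>4$ departs from $\D_4$, because $\sigma$ has height $3$. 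A single N1/N2'-rotation only brings it to $-N_{\sigma,\,\delta-\sigma}$, and $\sigma$ still has to be dismantled. Writing $\sigma=(\alpha_{l-2}+\alpha_{l-1})+\alpha_l$, two applications of N3'' --- legitimate because $(\alpha_{l-2}+\alpha_{l-1})+(\delta-\sigma)=-2e_{l-1}$ and $\alpha_{l-2}+\alpha_l$ are non-roots --- reduce this to $N_{\alpha_l,\,\delta-\sigma}\cdot N_{\alpha_{l-2}+\alpha_{l-1},\,\tau}/N_{\alpha_{l-2}+\alpha_{l-1},\,\alpha_l}$ with $N_{\alpha_{l-2}+\alpha_{l-1},\,\alpha_l}=1$. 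The remaining constant $N_{\alpha_{l-2}+\alpha_{l-1},\,\tau}$ cannot be handled by N3'' (the tempting instance fails because $\alpha_{l-1}+\tau$ is not a root), so one falls back on N3 itself: for the vanishing quadruple $\alpha_{l-2},\alpha_{l-1},\tau,-\delta$ the middle term drops out and one is left with $N_{\alpha_{l-2}+\alpha_{l-1},\,\tau}=N_{\tau,-\delta}=-N_{\tau,\alpha_{l-2}}N_{\alpha_{l-1},-\delta}=-(-1)(-1)=-1$, where $N_{\tau,\alpha_{l-2}}=-N_{\alpha_{l-2},\tau}=-1$ by (b) and $N_{\alpha_{l-1},-\delta}=N_{\delta,-\alpha_{l-1}}=-N_{\alpha_{l-1},\,\delta-\alpha_{l-1}}=-1$ by (a). Since also $N_{\alpha_l,\,\delta-\sigma}=-1$ as above, this gives $N_{\sigma,\,\delta-\sigma}=(-1)(-1)=1$, i.e. $N_{\sigma-\delta,\delta}=-1$.

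Putting the four together, $N_{\lambda-\delta,\delta}\,N_{\lambda-\delta,\rho}\,N_{\sigma-\delta,\delta}\,N_{\sigma-\delta,\tau}=(-1)(+1)(-1)(+1)=1$. The main obstacle is clearly the last factor: the height-$3$ root $\sigma$ forces the reductions out of the ``one simple entry'' range, so one needs the cascade of N3''/N3 steps above, and the real care goes into (i) recognizing which N3''-instances are valid and which require a retreat to N3, (ii) carrying all the signs correctly through that cascade, and (iii) disposing of the low-rank coincidences at $l=5$.
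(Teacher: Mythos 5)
Your computation is correct and follows essentially the same route as the paper: each of the four constants is evaluated separately via N1, N2$'$, N4 and the Theorem of \S2, the only laborious one being $N_{\sigma-\delta,\delta}$, for which both you and the paper dismantle $N_{\sigma,\delta-\sigma}$ through the cocycle identity, and all four intermediate values $(-1,+1,-1,+1)$ agree with the paper's. One side remark is inaccurate but harmless: $N_{\alpha_{l-2}+\alpha_{l-1},\tau}$ \emph{can} be handled by N3$''$ --- take $\alpha=\alpha_{l-1}$, $\beta=\alpha_{l-2}$, $\gamma=\tau$, legitimate precisely because $\alpha_{l-1}+\tau\notin\Phi$, which is what the paper does --- and your retreat to the full identity N3 for the quadruple $\alpha_{l-2},\alpha_{l-1},\tau,-\delta$ is the same relation in a different guise, so nothing is lost.
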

\begin{proof}
Первая из структурных констант считается также, как и в прошлой
лемме: $N_{\l-\d,\d} \stackrel{N2'}{=} N_{-\l,\l-\d}
\stackrel{N1}{=} -N_{\l,\d-\l}$. Так как $\l+(\d-\l)=\d$~---
максимальный корень, а $\l$~--- единственный простой корень,
который из него вычитается, то по N4 это выражение равно $-1$.

Далее, $N_{\l-\d,\rho} \stackrel{N2'}{=} N_{\rho,\d-\l-\rho}
\stackrel{N1}{=} -N_{\d-\l-\rho,\rho}$. Так как $\d-\l-\rho =
\dynkindeight000\dots 0001 = \a_l$, а $\rho =
\dynkindeight112\dots2210$, то, по теореме, $N_{\d-\l-\rho,\rho} =
-1$, а искомая константа, соответственно, равна $1$.

Считаем третью константу: $N_{\s-\d,\d} \stackrel{N2'}{=}
N_{-\s,\s-\d} \stackrel{N1}{=} -N_{\s,\d-\s}$. По условию, $\s =
\a_{n-2}+\a_{n-1}+\a_n$. Подставим в N3'' $\a=\a_{n-2}+\a_{n-1}$,
$\b=\a_n$ и $\g=\d-\s$. Тогда получим:
$N_{\a_n,\d-\s}N_{\a_{n-2}+\a_{n-1},\a_n+\d-\s} =
N_{\s,\d-\s}N_{\a_{n-2}+\a_{n-1},\a_n}$. При этом $N_{\a_n,\d-\s}
= -1$ по теореме и $N_{\a_{n-2}+\a_{n-1},\a_n} \stackrel{N1}{=}
-N_{\a_n,\a_{n-2}+\a_{n-1}} = 1$ снова по теореме. Чтобы посчитать
оставшийся коэффициент, $N_{\a_{n-2}+\a_{n-1},\a_n+\d-\s}$, еще
раз воспользуемся N3'': подставим $\a=\a_{n-1}$, $\b=\a_{n-2}$ и
$\g=\a_n+\d-\s$. Тогда получим
$N_{\a_{n-2},\a_n+\d-\s}N_{\a_{n-1},\a_{n-2}+\a_n+\d-\s} =
N_{\a_{n-1}+\a_{n-2},\a_n+\d-\s}N_{\a_{n-1},\a_{n-2}}$. Здесь
$N_{\a_{n-2},\a_n+\d-\s}$ и $N_{\a_{n-1},\a_{n-2}+\a_n+\d-\s}$
равны $1$ по теореме, а $N_{\a_{n-1},\a_{n-2}}$, тоже по теореме,
равно $-1$. Таким образом, $N_{\a_{n-1}+\a_{n-2},\a_n+\d-\s}=-1$,
откуда $N_{\s,\d-\s}=1$ и, наконец, $N_{\s-\d,\d}=-1$.

Считаем четвертую константу: $N_{\s-\d,\tau} \stackrel{N1}{=}
-N_{\d-\s,-\tau}\stackrel{N2'}{=}-N_{\s+\tau-\d,\d-\s}$. Заметим,
что $\s+\tau-\d=\a_n$, поэтому $N_{\s+\tau-\d,\d-\s}=-1$ по
теореме. Следовательно, $N_{\s-\d,\tau}=1$. Осталось собрать все
четыре найденные константы:
$N_{\l-\d,\d}N_{\l-\d,\rho}N_{\s-\d,\d}N_{\s-\d,\tau} =
(-1)\cdot1\cdot(-1)\cdot1=1$, что и требовалось.

\end{proof}

\begin{center}
{\bf \S3. Необходимые результаты из работы [18]}
\end{center}

4. Так как настоящая работа является переносом результатов статьи
[18] со случаев $\Phi=\E_6$, $\E_7$ и $\E_8$ на случаи $\A_l$ и $\D_l$,
стоит напомнить основные моменты той статьи. Мы рассматриваем
орбиты действия группы $G_0 = G_{\text{\rm sc}}(\Phi_0, K)$ на
пространстве $V_1 = \langle e_\a; \a\in\Phi_1\rangle$. Пусть $x\in
V_1$. В начале мы доказали, что можно считать, что $x=x^\l e_\l +
x^{\d-\l} e_{\d-\l} + x^\mu e_\mu + x^\nu e_\nu + x^\xi e_\xi$;
здесь $\l, \mu, \nu, \xi \in \Phi_1$~--- некоторые попарно
ортогональные корни и $\l+\mu+\nu+\xi = 2\d$. Далее мы доказывали,
что для систем $\Phi=\E_6$, $\E_7$ и $\E_8$ корни $\l$, $\mu$, $\nu$
и $\xi$ могут быть выбраны произвольно (в рассматриваемых в
настоящей статье случаях это, как мы увидим, не совсем так). После
этого мы заметили, что любой $x\in V_1$ приводится к одному из
следующих типов.

\begin{enumerate}
\item[I.] $x=0$.

\item[II.] $x=x^\l e_\l$; $x^\l\neq 0$. Есть для всех систем
$\Phi$, кроме $\A_1$.

\item[III.] $x=x^\l e_\l + x^\mu e_\mu$; $x^\l, x^\mu\neq 0$. Есть
для всех систем $\Phi$, кроме $\A_1$ и $\A_2$.

\item[IV.] $x=x^\l e_\l + x^\mu e_\mu + x^\nu e_\nu$; $x^\l,
x^\mu, x^\nu\neq 0$. Есть для всех систем $\Phi$, кроме $\A_l$.

\item[V.] $x=x^\l e_\l + x^\mu e_\mu + x^\nu e_\nu +x^\xi e_\xi$;
$x^\l, x^\mu, x^\nu, x^\xi\neq 0$. Есть для всех систем $\Phi$,
кроме $\A_l$.

\item[VI.] $x=x^\l e_\l + x^{\d-\l} e_{\d-\l}$; $x^\l,
x^{\d-\l}\neq 0$. Есть для всех систем $\Phi$, кроме $\A_1$.

\item[VII.] $x=x^\l e_\l + x^{\d-\l} e_{\d-\l} + x^\mu e_\mu$;
$x^\l, x^{\d-\l}, x^\mu\neq 0$. Есть для всех систем $\Phi$, кроме
$\A_1$ и $\A_2$.

\item[VIII.] $x=x^\l e_\l + x^{\d-\l} e_{\d-\l} + x^\mu e_\mu +
x^\nu e_\nu$; $x^\l, x^{\d-\l}, x^\mu, x^\nu\neq 0$. Есть для всех
систем $\Phi$, кроме $\A_l$.

\item[IX.] $x=x^\l e_\l + x^{\d-\l} e_{\d-\l} + x^\mu e_\mu +
x^\nu e_\nu +x^\xi e_\xi$; $x^\l, x^{\d-\l}, x^\mu,
x^\nu,x^\xi\neq 0$. Есть для всех систем $\Phi$, кроме $\A_l$.
\end{enumerate}

От VII и VIII случаев мы избавились сразу же. Далее, от VI случая
мы избавлялись при $\Phi\neq \A_l$ и $|K|>2$, а от IX~--- при
$\Char K\neq 2$. Кроме того, мы доказали следующую несложную
лемму:

\begin{lemma}[Лемма 1 из {[18]}]
Пусть
$w_{\a}(a)=x_{-\a}(-a^2+a)x_{\a}(-\frac1a)x_{-\a}(a-1)x_{\a}(1)$
при $\a\in\Phi_0$ и $a\in K^*$. Тогда $w_\a(a)e_\b = \frac1a e_\b$
при $\angle(\a,\b) = \pi/3$; $w_\a(a)e_\b = e_\b$ при
$\angle(\a,\b) = \pi/2$ и $w_\a(a)e_\b = a e_\b$ при
$\angle(\a,\b) = 2\pi/3$.
\end{lemma}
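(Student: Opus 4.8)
The plan is to identify $w_\a(a)$ with the semisimple torus generator usually written $h_\a(a^{-1})$ and to verify its action one $\a$-string at a time, reducing everything to a computation in $\SL_2$. The starting observation is that each $x_{\pm\a}(t)$ preserves the span of any $\a$-string in $\Phi$, hence so does $w_\a(a)$; it therefore suffices to treat separately the subspace spanned by the $\a$-string through a given $\b$.

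First I would dispose of the case $\angle(\a,\b)=\pi/2$: here $\b\pm\a\notin\Phi$, so each of the four factors $x_{\pm\a}(\cdot)$ fixes $e_\b$ by the action formulas recalled in \S1, and $w_\a(a)e_\b=e_\b$ follows immediately.

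For $\angle(\a,\b)=\pi/3$ I would use that $\Phi$ is simply laced, so the $\a$-string through $\b$ is exactly $\{\b,\ \b-\a\}$; put $\g=\b-\a$ and $W=\langle e_\b,e_\g\rangle$. From the formulas of \S1 (noting $\angle(\a,\b)=\angle(-\a,\g)=\pi/3$, $\angle(-\a,\b)=\angle(\a,\g)=2\pi/3$, and $\b+\a,\g-\a\notin\Phi$) the restrictions $x_\a(t)|_W$ and $x_{-\a}(t)|_W$ are, in the ordered basis $(e_\b,e_\g)$, the unipotent upper- and lower-triangular matrices with off-diagonal entries $N_{\a\g}t$ and $N_{-\a,\b}t$. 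The identity $N_{\a\g}N_{-\a,\b}=1$ — which follows from N3, or directly from the Jacobi identity $[e_\a,[e_{-\a},e_\b]]=[h_\a,e_\b]=e_\b$ together with $[e_\a,e_\b]=0$ — lets me rescale $e_\g$ so that $x_\a(t)|_W$ and $x_{-\a}(t)|_W$ become the standard unipotent generators of $\SL_2$. Then $w_\a(a)|_W$ is the product of four explicit matrices,
\[
\begin{pmatrix}1&0\\ a-a^2&1\end{pmatrix}
\begin{pmatrix}1&-1/a\\ 0&1\end{pmatrix}
\begin{pmatrix}1&0\\ a-1&1\end{pmatrix}
\begin{pmatrix}1&1\\ 0&1\end{pmatrix}
=\begin{pmatrix}1/a&0\\ 0&a\end{pmatrix},
\]
and reading off the first diagonal entry gives $w_\a(a)e_\b=\frac1a e_\b$. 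The case $\angle(\a,\b)=2\pi/3$ is handled identically, with $W=\langle e_{\b+\a},e_\b\rangle$, so that $e_\b$ is the second basis vector and picks up the factor $a$ instead.

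I expect the only point requiring any care to be the bookkeeping of structure constants and signs (via N1, N2$'$, N3) when normalizing $W$ to the standard $\SL_2$-module; the matrix product itself is routine and everything else is forced by the root-string combinatorics. Alternatively, one can bypass the reduction altogether by invoking the standard Chevalley-group identity $w_\a(a)=h_\a(a^{-1})$ together with the known weight action $h_\a(t)e_\b=t^{A_{\a\b}}e_\b$ in the adjoint representation, which yields the scalars $a^{-1}$, $1$, $a$ according as $A_{\a\b}=1,0,-1$, i.e.\ $\angle(\a,\b)=\pi/3,\pi/2,2\pi/3$.
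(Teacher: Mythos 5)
Your verification is correct, but there is nothing in the paper to compare it with: the statement is imported verbatim as Lemma~1 of [18], and the sentence immediately following it explicitly declines to reproduce the proof, referring the reader to that earlier paper. Your argument is therefore a complete, self-contained substitute rather than a variant of the paper's route. All the ingredients are right: the reduction to the $\a$-string through $\b$ (which has length two since $\Phi$ is simply laced), the observation that in the orthogonal case all four factors act trivially, the normalization via $N_{\a,\b-\a}N_{-\a,\b}=1$, and the $2\times 2$ product, which indeed equals $\mathrm{diag}(a^{-1},a)$ in the ordered basis $(e_\b,e_{\b-\a})$, giving the three scalars $a^{-1},1,a$ according to $A_{\a\b}=1,0,-1$. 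One small point: the sign identity is better justified, as you do in your parenthesis, by the Jacobi identity $[e_\a,[e_{-\a},e_\b]]=[h_\a,e_\b]=A_{\a\b}e_\b$ (with $[e_\a,e_\b]=0$) than by N3, whose terms degenerate when two of the four roots are opposite; and the closing alternative via $w_\a(a)=h_\a(a^{-1})$ is the conceptual content of the lemma but not genuinely a shortcut, since identifying the given four-factor product with $h_\a(a^{-1})$ requires exactly the same $\SL_2$ computation you already carried out.
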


С помощью этой леммы мы избавлялись от большинства коэффициентов в
оставшихся случаях и приходили к искомому списку орбит.

5. Далее мы доказывали, что все описанные случаи дают разные
орбиты. Во-первых, была доказана следующая лемма:

\begin{lemma}[Лемма 3 из {[18]}]
Пусть $\Phi\neq \A_l$, $\Char K\neq 2$ и $x = \sum_{\a\in\Phi_1}
x^\a e_\a$. Тогда существует и единственен корневой элемент $y =
\sum_{\a\in\Phi} y^\a e_\a + y^h$, в котором $y^\a=x^\a$ при
$\a\in\Phi_1$, $y^\d = 1$ и $y^h\in\langle h_\a;
\a\in\Phi_0\cap\Pi\rangle$.
\end{lemma}

Во-вторых, мы вводили следующие определения: элемент $x$
называется темным, если угол между соответствующим ему $y$ и
$e_\d$ равен $\pi$ (иначе говоря, $y^{-\d}\neq 0$). Далее, $x$
называется светящимся, если этот угол равен $2\pi/3$; блестящим,
если угол равен $\pi/2$ и сингулярным, если он равен $\pi/3$. Как
несложно видеть, последнее условие равносильно тому, что $x$
корневой элемент. Названия взяты из работы [29] для случая $\Phi =
\E_8$. Их определения в [29] другие, но, на самом деле,
равносильные.

Затем мы заметили, что при умножении на $g\in G_0$ элементу $gx$
соответствует корневой элемент $gy$. При этом угол между $y$ и
$e_\d$ при таком умножении также не меняется, значит определения
темного, светящегося, блестящего и сингулярного элементов можно
расширить на орбиты. Более того, если $x$ темный, то коэффициент
$y^{-\d}$ также не меняется при умножении на $g\in G_0$, то есть
тоже является инвариантом. Наконец, мы проверяли, что в случае II
вектор $x$ получается сингулярным, в случае III~--- блестящим, в
случае IV~--- светящимся, а в случае V~--- темным. При этом в
случае V инвариант $y^{-\d}$ равнялся произведению нескольких
структурных констант и $x^\l x^\mu x^\nu x^\xi$, поэтому
произведение всех коэффициентов $x^\l x^\mu x^\nu x^\xi$ также
постоянно. Для $\Phi=\E_l$, рассматриваемого в статье [18], этого
хватало для классификации. Для рассматриваемых в настоящей работе
$\Phi=\A_l$ и $\D_l$ это также будет очень полезным, но, к
сожалению, для полной классификации этого не хватит.

\begin{center}
{\bf \S4. Случай $\Phi=\A_l$}
\end{center}

6. Вернемся к уже упомянутому ранее вопросу существования и
произвольности выбора четырех попарно ортогональных корней $\l$,
$\mu$, $\nu$ и $\xi$ для разных систем $\Phi$. В рассматриваемом в
этом разделе случае $\Phi=\A_l$ может быть только два ортогональных
корня, так что корни $\nu$ и $\xi$ точно отсутствуют. Мы
перечислим орбиты этих корней под действием группы Вейля системы
$\Phi_0$; так как корни выбирались в теореме по порядку, то
порядок нам тоже важен. Таким образом, для определения корня $\l$
мы можем пользоваться всей группой Вейля, а для корня $\mu$~---
только подгруппой, оставляющей уже выбранный $\l$ неподвижным.

\begin{itemize}
\item Пусть $\Phi = \A_1$. В этом случае $\Phi_1$ пустое.

\item Пусть $\Phi = \A_2$. В этом случае $\Phi_1$ состоит из двух
корней с углом $2\pi/3$ между ними, а $\Phi_0$ пустое. Поэтому у
корня $\l$ получается две орбиты по одному корню в каждой, а
корней $\mu$, $\nu$ и $\xi$ в этом случае нет. Для удобства
обозначим корень $10$ через $\rho$. Тогда $\l=\rho$ или $\d-\rho$.

\item Пусть $\Phi = \A_l$, $l>2$. В этом случае $\Phi_1$ состоит из
корней, имеющих 1 при первом фундаментальном корне и 0 при
последнем, или наоборот. Таким образом, $\Phi_1$ распадается на
две части (по $l-1$ корню в каждой), причем корни из одной части
имеют угол $\pi/3$ между собой, а корни из разных частей~--- угол
$\pi/2$ или $2\pi/3$. В этом случае у корня $\l$ получается две
орбиты, у $\mu$ одна орбита (если $\l$ в одной части, то $\mu$
точно в другой), а корней $\nu$ и $\xi$ в этом случае тоже нет.
Подобно прошлому случаю, обозначим корень $\dynkinal1000$ через
$\rho$, а корень $\dynkinal0001$ через $\s$. Тогда можно считать,
что $\l=\rho$ и $\mu=\s$, или наоборот, $\l=\s$ и $\mu=\rho$.
\end{itemize}

7. Как известно, для случая $\Phi=\A_l$ элементы $V$ можно
представить в виде матриц размера $(l+1)\times (l+1)$. В
естественной нумерации максимальному корню $\d$ соответствует
единица в правом верхнем углу. При этом элементы из $V_1$ имеют
вид

$$\left(\begin{array}{ccccc}
0      & a_1    & \ldots & a_{l-1}& 0      \\
0      & 0      & \ldots & 0      & b_1    \\
\vdots & \vdots & \ddots & \vdots & \vdots \\
0      & 0      & \ldots & 0      & b_{l-1}\\
0      & 0      & \ldots & 0      & 0
\end{array}\right),
$$

то есть им соответствует пара из ковектора и вектора $((a_1,\dots,
a_{l-1}),\\
(b_1,\dots, b_{l-1})^T)$. Каждому корню соответствует своя
координата в векторе или ковекторе. Если двум различным корням
соответствуют координаты в векторе (или ковекторе), то угол между
ними равен $\pi/3$; если одному соответствует координата в
векторе, а другому в ковекторе, причем эти координаты с различным
номером, то угол между ними равен $\pi/2$; если же эти координаты
окажутся с одним номером, то угол будет равен $2\pi/3$.

Далее, $g\in G_0$ представляется в виде

$$\left(\begin{array}{ccccc}
1      & 0      & \ldots & 0      & 0      \\
0      & *      & \ldots & *      & 0      \\
\vdots & \vdots & \ddots & \vdots & \vdots \\
0      & *      & \ldots & *      & 0      \\
0      & 0      & \ldots & 0      & 1
\end{array}\right),
$$
где центральная подматрица из $\SL(l-1,K)$; обозначим эту подматрицу $g'$. При этом действие $G_0$ на $V_1$
соответствует сопряжению. Как несложно видеть, при таком действии
пара из ковектора и вектора $(u^T,v)$ переходит в
$(u^T(g')^{-1},g'v)$.

На таком языке, разумеется, можно разобрать целиком весь случай
$\Phi=\A_l$. Однако там, где это возможно и не приводит к большим
трудностям, мы сохраним взгляд и терминологию из предыдущей статьи
(и следующего параграфа).

8. Теперь посмотрим, как можно упростить перечень типов из
пункта~4 с помощью леммы~4 для систем корней типа $\A_l$:

\begin{itemize}
\item Пусть $\Phi = \A_1$. Тогда $\Phi_1$ пустое, $V_1$ состоит из
одного нуля и, соответственно, есть ровно одна нулевая орбита.

\item Пусть $\Phi = \A_2$. Тогда $\Phi_1$ состоит из двух векторов
$\rho$ и $\d-\rho$, $\Phi_0$ пустое, значит все векторы $x^\rho
e_\rho + x^{\d-\rho} e_{\d-\rho}$, при $x^\rho, x^{\d-\rho}\in K$,
образуют орбиты из одного элемента.

\item Пусть $\Phi = \A_3$. Тогда $\Phi_1$ состоит из четырех
векторов $\rho$, $\s$, $\d-\rho$ и $\d-\s$, а $\Phi_0 =
\{\d-\rho-\s, \rho+\s-\d\} = \A_1$. Из 9 случаев, перечисленных в
пункте~4, у нас остались I, II, III и VI типы:

\begin{enumerate}
\item[I.] $x=0$.

\item[II.] $x=x^\l e_\l$; $x^\l\neq 0$.

\item[III.] $x=x^\l e_\l + x^\mu e_\mu$; $x^\l, x^\mu\neq 0$.

\item[VI.] $x=x^\l e_\l + x^{\d-\l} e_{\d-\l}$; $x^\l,
x^{\d-\l}\neq 0$.
\end{enumerate}

В случае I, разумеется, остается 0. В случае II, по лемме и
описанным ранее особенностям выбора корней $\l$ в системах $\A_l$,
все элементы приводятся к одному из двух подтипов $x=e_\rho$ и
$x=e_\s$. В случае III по лемме получается избавиться лишь от
одного коэффициента, и получается серия типов $x=e_\rho + x^\s
e_\s$, $x^\s\neq0$. В случае VI~--- тоже лишь от одного
коэффициента, и получается серия типов $x=e_\rho + x^{\d-\rho}
e_{\d-\rho}$, $x^{\d-\rho}\neq0$. Во всех случаях в лемме~4
полагаем $\a=\d-\rho-\s$ и выбираем подходящий коэффициент $a$. Из
этого следует, что при $\Phi = \A_3$ любой $x\in V_1$ приводится к
одному из следующих типов:

\begin{enumerate}
\item[I.] $x=0$.

\item[II.]

\begin{enumerate}

\item[а)] $x=e_\rho$;

\item[б)] $x=e_\s$.
\end{enumerate}

\item[III.] $x=e_\rho + x^\s e_\s$, $x^\s\neq0$.

\item[VI.] $x=e_\rho + x^{\d-\rho} e_{\d-\rho}$,
$x^{\d-\rho}\neq0$.
\end{enumerate}

\item Пусть $\Phi = \A_l$, $l>3$. Как и в прошлый раз, у нас
остались I, II, III и VI типы:

\begin{enumerate}
\item[I.] $x=0$.

\item[II.] $x=x^\l e_\l$; $x^\l\neq 0$.

\item[III.] $x=x^\l e_\l + x^\mu e_\mu$; $x^\l, x^\mu\neq 0$.

\item[VI.] $x=x^\l e_\l + x^{\d-\l} e_{\d-\l}$; $x^\l,
x^{\d-\l}\neq 0$.
\end{enumerate}

В случае I, разумеется, остается 0. В случае II, как и в
предыдущем пункте, все элементы приводятся к одному из двух
подтипов $x=e_\rho$ и $x=e_\s$. В случае III с помощью леммы
удается избавиться от обоих коэффициентов: после перехода к
$x=e_\rho + x^\s e_\s$, $x^\s\neq0$, аналогичному предыдущему
пункту, положим $\a = \dynkinal0010$ и выберем подходящий
коэффициент $a$. Тогда в этом случае получается один тип $x=e_\rho
+ e_\s$. В случае VI, как и в предыдущем пункте, получается
избавиться лишь от одного коэффициента, и получается серия типов
$x=e_\rho + x^{\d-\rho} e_{\d-\rho}$, $x^{\d-\rho}\neq0$. Из этого
следует, что при $\Phi = \A_l$, $l>3$, любой $x\in V_1$ приводится
к одному из следующих типов:

\begin{enumerate}
\item[I.] $x=0$.

\item[II.]

\begin{enumerate}

\item[а)] $x=e_\rho$;

\item[б)] $x=e_\s$.
\end{enumerate}

\item[III.] $x=e_\rho + e_\s$.

\item[VI.] $x=e_\rho + x^{\d-\rho} e_{\d-\rho}$,
$x^{\d-\rho}\neq0$.
\end{enumerate}
\end{itemize}

9. Докажем, что все полученные типы лежат в различных орбитах.
Случаи $\Phi=\A_1$ и $\Phi=\A_2$ очевидны. Рассмотрим случай
$\Phi=\A_3$. Тип I с нулевым вектором, как и для всех других
систем, образует одну орбиту. Для разделения остальных типов
воспользуемся рассуждениями из пункта~7. В II а) пара из
ковектора и вектора~--- это $((1,0),(0,0)^T)$; в II б)~---
$((0,0),(0,1)^T)$; в III~--- $((1,0),(0,x^\s)^T)$; в VI~---
$((1,0),(x^{\d-\rho},0)^T)$. Как несложно видеть, нулевой вектор
(и ковектор) остаются нулевыми, значит в II а) и II б) получаются
две отдельные орбиты. Таким образом, остались III и VI типы. Если
$(1,0)(g')^{-1})=(1,0)$, то первая строка $(g')^{-1}$ тоже состоит
из $1$ и $0$. Тогда первая строка $g'$ такая же, поэтому умножение
на $g'$ оставляет первую координату вектора неизменной. Это
означает, что для типа VI для каждого $x^{\d-\rho}$ действительно
получаются своя орбита. Более того, так как $g'\in\SL(2,K)$, то
матрица $g'$ будет иметь вид

$$\left(\begin{array}{cc}
1 & 0 \\
* & 1
\end{array}\right).
$$

Поэтому если первая координата вектора равна $0$, то вторая
координата тоже остается неизменной. Это доказывает, что в III
типе также при каждом $x^\s$ будет получаться своя орбита.

Для случая $\Phi=\A_l$ при $l>3$ все рассуждения точно такие же,
только отсутствуют три последних предложения, так как здесь в типе
III получалась одна орбита.

\begin{center}
{\bf \S5. Случай $\Phi=\D_l$}
\end{center}

10. Прежде чем разбирать серию $\D_l$, докажем следующую несложную
лемму:

\begin{lemma} Пусть
$$A=\left(\begin{array}{cc}
0 & p \\
q & 0
\end{array}\right), \quad
B=\left(\begin{array}{cc}
0 & s \\
t & 0
\end{array}\right).
$$

Тогда
\begin{enumerate}
\item Предположим, что $pq=k\neq 0$. Тогда $A$ и $B$ сопряжены
матрицей из $\SL(2,K)$ тогда и только тогда, когда $st=k$ и
существуют $x,y\in K$, такие что $x^2-ky^2 = ps$.

\item Предположим, что $q=0=t$. Тогда $A$ и $B$ сопряжены матрицей
из $\SL(2,K)$ тогда и только тогда, когда существует $x\in K^*$,
такие что $x^2p = s$.
\end{enumerate}
\end{lemma}
\begin{proof}
$A$ и $B$ сопряжены матрицей из $\SL(2,K)\Leftrightarrow$
существует $G\in\SL(2,K)$, такая что $GA=BG$. Расписывая это
равенство, получаем

$$\left(\begin{array}{cc}
g_{11} & g_{12} \\
g_{21} & g_{22}
\end{array}\right)\cdot\left(\begin{array}{cc}
0 & p \\
q & 0
\end{array}\right) = \left(\begin{array}{cc}
0 & s \\
t & 0
\end{array}\right)\cdot\left(\begin{array}{cc}
g_{11} & g_{12} \\
g_{21} & g_{22}
\end{array}\right).
$$
Перемножая и приравнивая обе части, получаем систему уравнений
$$\left\{\begin{array}{rcl}
g_{12}q & = & g_{21}s \\
g_{22}q & = & g_{11}t \\
g_{11}p & = & g_{22}s\\
g_{21}p & = & g_{12}t\end{array}\right. .
$$
Докажем пункт 1. Пусть $A$ и $B$ сопряжены. Тогда равенство
$st=k=pq$ очевидно из равенства определителей и, в частности, $p,q,s,t\neq 0$.
Так как $G\in\SL(2,K)$, то $g_{11}g_{22}-g_{12}g_{21} = 1$. Из
первого и третьего уравнений получаем $g_{21}=g_{12}q/s$ и
$g_{22} = g_{11}p/s$ (второе и четвертое уравнение им равносильны в силу того, что $st=pq$). Подставляя эти выражения в предыдущее
равенство, получаем $g_{11}^2p/s - g_{12}^2q/s = 1$. Умножая
полученное на $ps$, получаем нужное: $(g_{11}p)^2 - g_{12}^2pq =
ps$. Доказательство в другую сторону уже практически очевидно:
полагая $g_{12}=y$, $g_{11}=x/p$, $g_{21}=g_{12}q/s$ и $g_{22} =
g_{11}p/s$, получаем, как несложно видеть, требуемое.

Докажем пункт 2. Понятно, что достаточно разобрать случай, когда
$p$ и $s$ не равны $0$. Пусть $A$ и $B$ сопряжены. Тогда из
первого уравнения системы получаем, что $g_{21}=0$. При этом из
всех уравнений системы остается лишь третье: $g_{11}p=g_{22}s$.
Так как $G\in\SL(2,K)$, то $g_{11}g_{22} - g_{12}g_{21} = 1$,
откуда $g_{22}=1/g_{11}$. Подставляя в третье уравнение, получаем
$g_{11}p=s/g_{11}$, откуда следует нужное равенство.
Доказательство в другую сторону тоже очевидно: полагая $g_{11}=x$,
$g_{22}=1/g_{11}$, $g_{21}=0$, а $g_{12}$ любым, получим
требуемое.
\end{proof}

11. Объясним, где будет использоваться эта лемма. Пусть $\Phi=\D_4$
и $x\in V_1$. По лемме~5, этому элементу соответствует корневой
элемент $y = \sum_{\a\in\Phi} y^\a e_\a + y^h$, причем $y^\a=x^\a$
при $\a\in\Phi_1$, $y^\d = 1$ и $y^h\in\langle h_\a;
\a\in\Phi_0\cap\Pi\rangle$. Как следует из утверждения~2 [17],
$$y = x_{-\d}(\cdot) x_{\l-\d}(N_{\l-\d,\d}x^\l)
x_{\rho-\d}(N_{\rho-\d,\d}x^\rho) x_{\s-\d}(N_{\s-\d,\d}x^\s)
x_{\tau-\d}(N_{\tau-\d,\d}x^\tau) e_\d.$$ При этом, как несложно
видеть, коэффициент при $x_{-\d}$ должен быть равен $0$ (чтобы $y^h\in\langle h_\a;
\a\in\Phi_0\cap\Pi\rangle$), и $y^h$
будет равно $0$. Тогда $y^{\l+\rho-\d} =
N_{\l-\d,\d}N_{\l-\d,\rho} x^\l x^\rho$, $y^{\d-\l-\rho} =
N_{\s-\d,\d} N_{\s-\d,\tau} x^\s x^\tau$, $y^{\l+\s-\d} =
N_{\l-\d,\d} N_{\l-\d,\s} x^\l x^\s$, $y^{\d-\l-\s} =
N_{\rho-\d,\d} N_{\rho-\d,\tau} x^\rho x^\tau$, $y^{\l+\tau-\d} =
N_{\l-\d,\d} N_{\l-\d,\tau} x^\l x^\tau$, $y^{\d-\l-\tau} =
N_{\rho-\d,\d} N_{\rho-\d,\s} x^\rho x^\s$. Как уже отмечалось,
при умножении на $g\in G_0$ элементу $gx$ соответствует элемент
$gy$, поэтому можно вместо элемента $x$ изучать $y$.

Положим $V_0=\langle e_\a, \a\in\Phi_0; h_\a,
\a\in\Phi_0\cap\Pi\rangle$. Далее, пусть $z\in V_0$, $z^\a = y^\a$
при $\a\in\Phi_0$ и $z^h=y^h=0$. Как несложно видеть, при
умножении на $g\in G_0$ элементу $gy$ соответствует элемент $gz$.
Так как в нашем случае $\Phi_0 = \A_1\times \A_1\times \A_1$, то $z$
можно представить в виде $z_1+z_2+z_3$, где каждый $z_i$ из своей
подалгебры типа $\A_1$; при этом $z_1^h + z_2^h + z_3^h = z^h = 0$,
поэтому все $z_i^h = 0$. Наконец, $g\in G_0$ можно разложить
схожим образом на произведение $g_1g_2g_3$, где каждый $g_i$
принадлежит своей группе $G_{\text{\rm sc}}(\A_1,K)\cong\SL(2,K)$;
при этом каждый элемент $g_i$ действует только на "своем"\ $z_i$:
$g_1g_2g_3(z_1+z_2+z_3) = g_1z_1 + g_2z_2 + g_3z_3$. Отметим, что
$g_i$ друг с другом никак не связаны и могут подбираться
независимо. В естественном представлении $z_i$ представляются
матрицами $2\times 2$, а $g_i\in\SL(2,K)$ действуют на них
сопряжением. Для подсчитанного ранее элемента $y$ получаем:

$$z_1 = \left(\begin{array}{cc}
 0              & N_{\l-\d,\d}N_{\l-\d,\rho} x^\l x^\rho \\
N_{\s-\d,\d} N_{\s-\d,\tau} x^\s x^\tau & 0
\end{array}\right),$$
$$z_2 = \left(\begin{array}{cc}
 0                & N_{\l-\d,\d} N_{\l-\d,\s} x^\l x^\s \\
N_{\rho-\d,\d} N_{\rho-\d,\tau} x^\rho x^\tau & 0
\end{array}\right),$$
$$z_3 = \left(\begin{array}{cc}
 0              & N_{\l-\d,\d} N_{\l-\d,\tau} x^\l x^\tau \\
N_{\rho-\d,\d} N_{\rho-\d,\s} x^\rho x^\s & 0
\end{array}\right).
$$

Здесь можно воспользоваться леммой~6. Подробно для каждого
случая мы рассмотрим ее в пункте 16; отметим лишь, что
коэффициент $k$, возникающий в лемме, для $z_1$ равен
$(N_{\l-\d,\d}N_{\l-\d,\rho} x^\l x^\rho)\cdot(N_{\s-\d,\d}
N_{\s-\d,\tau} x^\s x^\tau)$, что равно $x^\l x^\rho x^\s x^\tau$
по лемме~2. Для $z_2$, аналогичными рассуждениями, получается то же самое выражение. Нужно также обратить внимание, что если переход от $x$ к $y$ обратим, то от $y$ к $z$~--- уже нет: в один $z$ могут переходить разные $y$ (и, соответственно, разные $x$). 

Если $\Phi=\D_l$, $l>4$, то рассуждение похожее. Так как сейчас
$\Phi_0=\A_1\times \D_{l-2}$ (для простоты будем полагать $\D_3=\A_3$,
чтобы не рассматривать случай $l=5$ отдельно), то $z$ можно
представить в виде $z_1+z_2$, где $z_1$ из подалгебры типа $\A_1$,
а $z_2$~--- типа $\D_{l-2}$; при этом $z^h$ также представится в
виде $z_1^h + z_2^h$, и все они будут равны $0$. Продолжая рассуждения, мы обнаруживаем, что
матрица $z_1$, как и в прошлый раз, попадает в условие леммы. С
$z_2$ ситуация получается другой, больше схожей с рассмотренными в
[18] случаями $\Phi=\E_6$, $\E_7$ и $\E_8$.

Использоваться это будет, в первую очередь, для доказательства того, что два элемента
не лежат в одной орбите. В самом деле, если $x, x'\in V_1$
переходят друг в друга, то и соответствующие им корневые элементы
$y, y'\in V$ тоже, а значит и соответствующие им элементы $z,
z'\in V_0$ тоже. При этом $z$ и $z'$ распадаются на более простые
части, и эти части также будут переходить друг в друга.
Соответственно, если, скажем, $z_1$ и $z_1'$ друг в друга не
переходят, то $x$ и $x'$ лежат в разных орбитах.

12. Продолжим изучение вопроса существования и произвольности
выбора четырех попарно ортогональных корней $\l$, $\mu$, $\nu$ и
$\xi$ для разных систем корней, сейчас для $\Phi=\D_l$. Как и
прежде, мы перечислим орбиты этих корней под действием группы
Вейля системы $\Phi_0$; так как корни выбирались в теореме по
порядку, то порядок нам тоже важен. Таким образом, для определения
корня $\l$ мы можем пользоваться всей группой Вейля, для корня
$\mu$~--- только подгруппой, оставляющей уже выбранный $\l$
неподвижным, для корня $\nu$~--- подгруппой, оставляющей $\l$ и
$\mu$ неподвижными. Корень $\xi$ определяется по корням $\l$,
$\mu$ и $\nu$ однозначно, поэтому про него можно и не говорить (к
слову, подгруппа, оставляющая на месте $\l$, $\mu$ и $\nu$, будет
единичной).

\begin{itemize}
\item Пусть $\Phi = \D_4$. В этом случае $\Phi_1$ состоит из 8
корней $\dynkindfour0010$, $\dynkindfour0011$, $\dynkindfour0110$,
$\dynkindfour0111$, $\dynkindfour1010$, $\dynkindfour1011$,
$\dynkindfour1110$ и $\dynkindfour1111$; при этом у корня $\l$
получается одна орбита; можно считать, что $\l=\dynkindfour0010$.
С корнями $\mu$, $\nu$ и $\xi$ ситуация интереснее~--- множество
$\{\mu, \nu, \xi\}$ однозначно определяется выбором $\l$, но друг
в друга корни $\mu$, $\nu$ и $\xi$ не переводятся; поэтому у корня
$\mu$ получается три орбиты, у $\nu$ две орбиты, а у $\xi$~---
одна орбита. Обозначим корень $\dynkindfour0111$ через $\rho$,
$\dynkindfour1011$~--- через $\s$, а $\dynkindfour1110$~--- через
$\tau$. Тогда $\{\mu,\nu,\xi\} = \{\rho,\s,\tau\}$, и возможны все
6 перестановочных вариантов.

\item Пусть $\Phi = \D_l$, $l>4$. В этом случае $\Phi_1$ состоит из
$4l-8$ корней вида $\dynkindeight***\dots **1*$. Несложно видеть,
что все эти корни переводятся друг в друга, то есть у корня $\l$
одна орбита; можно считать, что $\l = \dynkindeight000\dots 0010$.
Корни из $\Phi_1$, ортогональные $\l$~--- это либо
$\dynkindeight112\dots2210$, либо корни вида
$\dynkindeight***\dots*111$; при этом первый корень ортогонален
всем остальным. Корни вида $\dynkindeight***\dots*111$ переводятся
друг в друга, поэтому из них можно выбирать любой; к корню
$\dynkindeight000\dots0111$, скажем, в этом множестве есть ровно
один ортогональный $\dynkindeight112\dots2111$. Итого, получается
что у корня $\mu$ есть две орбиты, причем в одной из них у $\nu$ и
$\xi$ по одной орбите, а в другой у $\nu$ две орбиты, а у $\xi$
одна; у множества $\{\l, \mu, \nu, \xi\}$ одна орбита. Обозначим
корень $\dynkindeight112\dots2210$ через $\rho$,
$\dynkindeight000\dots0111$~--- через $\s$, а
$\dynkindeight112\dots2111$~ через $\tau$. Тогда можно считать,
что либо $\mu=\rho$, $\nu=\s$ и $\xi=\tau$, либо $\mu=\s$,
$\nu=\rho$ и $\xi=\tau$, либо $\mu=\s$, $\nu=\tau$ и $\xi=\rho$.
\end{itemize}

13. На самом деле, однако, нас интересуют немного другие орбиты, а
именно, орбиты множеств $\{\l\}$, $\{\l, \mu\}$, $\{\l, \mu,
\nu\}$ и $\{\l, \mu, \nu, \xi\}$. Иначе говоря, порядок корней нам
не важен. Это, разумеется, более слабое условие и некоторые орбиты
могут объединиться. Для рассмотренных ранее случаев $\Phi=\E_l$ (в
[18]) и $\Phi=\A_l$ этого не происходило, но в рассматриваемом случае
это кое-где происходит.

\begin{itemize}
\item Пусть $\Phi=\D_4$. Тогда из изложенного в предыдущем пункте
получается:

\begin{enumerate}
\item[1)] у корня $\l$ одна орбита;

\item[2)] пару корней $\{\l, \mu\}$ можно перевести в $\{\l,
\rho\}$, $\{\l, \s\}$ или $\{\l, \tau\}$;

\item[3)] тройку корней $\{\l, \mu, \nu\}$ можно перевести в
$\{\l, \rho, \s\}$, $\{\l, \rho, \tau\}$ или $\{\l, \s, \tau\}$;

\item[4)] у четверки корней $\{\l, \mu, \nu, \xi\}$ одна орбита.
\end{enumerate}

Оказывается, в случае 3) у нас всего одна орбита, и в этом
несложно убедиться непосредственно. А именно, тройка $\{\l, \rho,
\s\}$ под действием элемента группы Вейля, соответствующего корню
$\l+\rho-\d$, переходит в $\{\d-\rho, \d-\l, \d-\tau\}$;
полученная тройка под действием элемента группы Вейля,
соответствующего корню $\l+\tau-\d$, переходит в $\{\s, \tau,
\l\}$. Для оставшейся тройки рассуждение аналогичное.

\item Пусть $\Phi=\D_l$, $l>4$. Тогда из изложенного в предыдущем
пункте получается:

\begin{enumerate}
\item[1)] у корня $\l$ одна орбита;

\item[2)] пару корней $\{\l, \mu\}$ можно перевести в $\{\l,
\rho\}$ или $\{\l, \s\}$;

\item[3)] тройку корней $\{\l, \mu, \nu\}$ можно перевести в
$\{\l, \rho, \s\}$ или $\{\l, \s, \tau\}$;

\item[4)] у четверки корней $\{\l, \mu, \nu, \xi\}$ одна орбита.
\end{enumerate}

Повторяя дословно рассуждение из случая $\Phi=\D_4$, мы снова
обнаруживаем, что в 3), на самом деле, одна орбита.

\end{itemize}

14. Пусть $\Phi = \D_4$. Тогда $\Phi_1$ состоит из восьми векторов
$\l$, $\rho$, $\s$, $\tau$, $\d-\l$, $\d-\rho$, $\d-\s$ и
$\d-\tau$, образующих куб (к слову, он уже обсуждался в статье
[18]); $\Phi_0 = \{\pm(\d-\l-\rho), \pm(\d-\l-\s),
\pm(\d-\l-\tau)\}=\A_1\times \A_1\times \A_1$. В этом случае у нас
остались первые пять типов:

\begin{enumerate}
\item[I.] $x=0$.

\item[II.] $x=x^\l e_\l$; $x^\l\neq 0$.

\item[III.] $x=x^\l e_\l + x^\mu e_\mu$; $x^\l, x^\mu\neq 0$.

\item[IV.] $x=x^\l e_\l + x^\mu e_\mu + x^\nu e_\nu$; $x^\l,
x^\mu, x^\nu\neq 0$.

\item[V.] $x=x^\l e_\l + x^\mu e_\mu + x^\nu e_\nu +x^\xi e_\xi$;
$x^\l, x^\mu, x^\nu, x^\xi\neq 0$.
\end{enumerate}

Как мы доказывали в [18] (и уже упоминали в пункте 5), в случае II
вектор $x$ получается сингулярным, в случае III~--- блестящим, в
случае IV~--- светящимся, а в случае V~--- темным. Кроме того, в
случае V произведение всех коэффициентов $x^\l x^\mu x^\nu x^\xi$
является неизменным. Это означает, что элементы $x$ из разных
типов точно лежат в различных орбитах. Осталось понять, как
обстоят дела внутри каждого типа.

По пункту 13 мы получаем, что при переходе к конкретным корням
тип III распадается на три подтипа.

\begin{enumerate}
\item[I.] $x=0$.

\item[II.] $x=x^\l e_\l$; $x^\l\neq 0$.

\item[III.]

\begin{enumerate}

\item[a)] $x=x^\l e_\l + x^\rho e_\rho$; $x^\l, x^\rho\neq 0$;

\item[б)] $x=x^\l e_\l + x^\s e_\s$; $x^\l, x^\s\neq 0$;

\item[в)] $x=x^\l e_\l + x^\tau e_\tau$; $x^\l, x^\tau\neq 0$.
\end{enumerate}

\item[IV.] $x=x^\l e_\l + x^\rho e_\rho + x^\s e_\s$; $x^\l,
x^\rho, x^\s\neq 0$.

\item[V.] $x=x^\l e_\l + x^\rho e_\rho + x^\s e_\s +x^\tau
e_\tau$; $x^\l, x^\rho, x^\s, x^\tau\neq 0$.
\end{enumerate}

Отметим, однако, что пока мы еще не доказали, что элементы из
разных подтипов лежат в разных орбитах. Это будет сделано чуть
позднее.

15. Как несложно видеть, если взять вектор $x=x^\l e_\l + x^\rho
e_\rho + x^\s e_\s +x^\tau e_\tau$ (будем считать, что $x^\l\neq
0$, а остальные коэффициенты произвольные), то при применении к
нему любого $w_\a(a)$ из леммы~4, $\a\in\Phi_0$, два его
коэффициента умножатся на $a$, а два других~--- на $\frac1a$.
Тогда, применяя это действие три раза для трех ортогональных
корней из $\Phi_0$, из вектора $x$ можно получить $x^\l {abc} e_\l
+ \frac {x^\rho a}{bc} e_\rho + \frac{x^\s b}{ac} e_\s
+\frac{x^\tau c}{ab} e_\tau$. Если положить $c=\frac1{x^\l ab}$,
чтобы коэффициент при $e_\l$ стал равен $1$, то $x^\l {abc} e_\l +
\frac {x^\rho a}{bc} e_\rho + \frac{x^\s b}{ac} e_\s +\frac{x^\tau
c}{ab} e_\tau = e_\l + x^\l x^\rho a^2 e_\rho + x^\l x^\s b^2 e_\s
+ \frac{x^\tau}{x^\l a^2 b^2} e_\tau$. Таким образом, $x^\l$ можно
сделать равным $1$, а два других можно умножать на произвольные
ненулевые квадраты.

16. Рассмотрим классификацию из 14 пункта. С типом I все
понятно. В типе II по пункту 15 всего одна орбита, поэтому с
ним тоже нет никаких проблем. В типе III из матриц $z_1$, $z_2$ и
$z_3$ две матрицы нулевые, а одна имеет ненулевой коэффициент над
диагональю, то есть подпадает под условие леммы 6, 2): в III а)
это $z_1$, в III б)~--- $z_2$, а в III в)~--- $z_3$. Так как
нулевая матрица при сопряжении переходит в нулевую, а ненулевая
нет, то все три подтипа действительно разные. По лемме 6, 2) и виду $z_i$, если два вектора из одного подтипа лежат в одной орбите, то произведение коэффициентов у них лежат
в одном классе $K^*/(K^*)^2$. А из пункта~15 следует обратное: если произведение коэффициентов у них лежат
в одном классе $K^*/(K^*)^2$, то векторы лежат в одной орбите. Таким образом, эти два утверждения равносильны. Если при выборе представителя каждой
орбиты сделать $x^\l$ равным $1$, то оставшийся коэффициент можно
выбирать из $K^*/(K^*)^2$.

Аналогично, в типе IV матрицы $z_1$, $z_2$ и $z_3^T$ имеют
ненулевой коэффициент над диагональю, то есть подпадают под
условие леммы~6, 2) (отметим, что если матрицы сопряжены, то и
транспонированные к ним матрицы также сопряжены). Соответственно,
по лемме~6, 2), виду $z_i$ и пункту~15, если при выборе представителя
каждой орбиты сделать $x^\l$ равным $1$, то два оставшихся
коэффициента можно выбирать из $K^*/(K^*)^2$.

Осталось рассмотреть случай, когда вектор $x$ имеет тип V. До сих
пор мы использовали переход к матрицам $z_i$ и лемму~6 для
доказательства лишь в одну сторону~--- что если элементы $x$ и
$\tilde x$ сопряжены, то соответствующие $z_i$ также сопряжены, и
выполняется условие на коэффициенты из леммы~6. Для
доказательства в другую сторону нам хватало пункта~15. В данном
случае ситуация иная~--- лемма~6 и пункт~15 дают различные
оценки. Поэтому вместо рассуждений из пункта~15 мы докажем, что
если соответствующие $z_i$ сопряжены, то и изначальные элементы
также сопряжены.

В типе V все матрицы $z_i$ имеют ненулевые коэффициенты как над,
так и под диагональю, то есть подпадают под условие леммы 6,
1). Предположим, что мы сумели матрицы $z_1$ и $z_2$ перевести
сопряжением, как в лемме 6, 1), в матрицы
$$\tilde z_1 = \left(\begin{array}{cc}
 0    & \pm a \\
\pm b & 0
\end{array}\right),
\tilde z_2 = \left(\begin{array}{cc}
 0    & \pm c \\
\pm d & 0
\end{array}\right),
$$
где знаки $\pm$ в каждой позиции совпадает с соответствующим
знаком в матрицах $z_1$ и $z_2$ в пункте 11; $a$, $b$, $c$ и
$d$ при этом будут отличны от $0$. Докажем, что тогда вектор $x$
можно перевести в вектор $e_\l + a e_\rho + c e_\s + \frac bc
e_\tau$.

Когда мы перевели, как в лемме 6, 1), матрицы $z_1$ и $z_2$ в
$\tilde z_1$ и $\tilde z_2$, вектор $x$ перешел в некоторый вектор
$\tilde x = \tilde x^\l e_\l + \tilde x^\rho e_\rho + \tilde x^\s
e_\s + \tilde x^\tau e_\tau + \tilde x^{\d-\l} e_{\d-\l} + \tilde
x^{\d-\rho} e_{\d-\rho} + \tilde x^{\d-\s} e_{\d-\s} + \tilde
x^{\d-\tau} e_{\d-\tau}$. Как и раньше, обозначим через $\tilde y$
соответствующий ему по лемме 5 элемент из $V$. Напомним, что соответствующий, в свою очередь, этому $\tilde y$ элемент $\tilde z$ распадается на те самые $\tilde z_1$, $\tilde z_2$ и $\tilde z_3$, поэтому $a=\pm\tilde y^{\l+\rho-\d}$, $b=\pm\tilde y^{\d-\l-\rho}$, $c=\pm\tilde y^{\l+\s-\d}$ и $d=\pm\tilde y^{\d-\l-\s}$. 
Заметим, что действие корневыми элементами $x_{\l+\tau-\d}(\cdot)$ и
$x_{\d-\l-\tau}(\cdot)$ не меняют коэффициенты в разложении $\tilde y$ при элементах
$e_{\l+\rho-\d}, e_{\d-\l-\rho}, e_{\l+\s-\d}$ и $e_{\d-\l-\s}$ и, соответственно, коэффициенты $a, b, c$ и $d$, как и все матрицы $\tilde z_1$ и $\tilde z_2$ в целом. Можно также отметить, что $\Phi_0$ состоит из трех пар корней, и соответствующие каждой паре корневые элементы меняют только одну $\tilde z_i$ из трех; в частности, $x_{\l+\tau-\d}(\cdot)$ и $x_{\d-\l-\tau}(\cdot)$ меняют $\tilde z_3$.

Как несложно видеть, $0 \neq a = \pm\tilde y^{\l+\rho-\d} = \pm \tilde x^\l \tilde
x^\rho \pm \tilde x^{\d-\s} \tilde x^{\d-\tau}$, поэтому $\tilde x^\l$ или $\tilde x^{\d-\tau}$ отличен от $0$. Действуя, при необходимости, $x_{\d-\l-\tau}(\cdot)$, можно считать, что $\tilde x^{\d-\tau}\neq 0$. Далее, действуя $x_{\l+\tau-\d}(\cdot)$, можно добиться того, чтобы $\tilde x^\l=1$ и, снова действуя $x_{\d-\l-\tau}(\cdot)$, чтобы $\tilde x^{\d-\tau} = 0$. В силу вышеуказанной формулы $a = \pm \tilde x^\l \tilde x^\rho \pm \tilde x^{\d-\s}
\tilde x^{\d-\tau}$ и выбора знака при определении $a$, получаем,
что $\tilde x^\rho = a$. Тогда, действуя корневым элементом
$x_{\l+\tau-\d}(\cdot)$, можно добиться того, что $\tilde
x^{\d-\s} = 0$. Теперь $\tilde x = e_\l + a e_\rho + \tilde x^\s
e_\s + \tilde x^\tau e_\tau + \tilde x^{\d-\l} e_{\d-\l} + \tilde
x^{\d-\rho} e_{\d-\rho}$. Аналогично, так как $c = \pm \tilde x^\l
\tilde x^\s \pm \tilde x^{\d-\rho}\tilde x^{\d-\tau}$, мы
получаем, что $\tilde x^\s = c$. Наконец, действуя корневым
элементом $x_{\d-\l-\rho}(\cdot)$, можно также добиться того, что
$\tilde x^{\d-\rho} = 0$; выбранные ранее коэффициенты $\tilde x^\l$, $\tilde x^\rho$, $\tilde x^\s$ (так как $\tilde x^{\d-\tau} = 0$), $\tilde x^{\d-\tau}$ и $\tilde x^{\d-\s}$ при этом не меняются. Матрица $\tilde z_1$ изменится, но она
нам уже не нужна.

Таким образом, теперь $\tilde x = e_\l + a e_\rho + c e_\s +
\tilde x^\tau e_\tau + \tilde x^{\d-\l} e_{\d-\l}$. Для того чтобы избавиться от последнего слагаемого, сошлемся на вычисления из [18], пункт~7, упрощение IX случая. Они проходят дословно, только в той статье элемент записывался в виде $x=x^\l e_\l + x^{\d-\l} e_{\d-\l} + x^\mu e_\mu + x^\nu e_\nu + x^\xi e_\xi$, поэтому нужно сделать замены $\mu:=\rho$, $\nu:=\tau$, $\xi:=\s$ и, соответственно, $x^\l:=1$, $x^{\d-\l}:=\tilde x^{\d-\l}$, $x^\mu:=a$, $x^\nu:=\tilde x^\tau$ и $x^\xi:=c$. В [18] требовалось, чтобы все коэффициенты были не равны $0$, но, на самом деле, в тех рассуждениях $x^\tau$ вполне может равняться $0$, так как он нигде не оказывается в знаменателе. После выполнения вычислений из [18] получался вектор $\frac{x^\l}{x^\mu} e_\l + e_\mu + (x^\nu x^\mu + \frac{x^\l
(x^{\d-\l})^2}{4x^\xi}) e_\nu + x^\xi x^\mu e_\xi$; если же на
полученный вектор подействовать элементом $w_{\d-\l-\mu}(x^\mu)$
(из леммы 4), то получится вектор $x^\l e_\l + x^\mu e_\mu +
(x^\nu + \frac{x^\l (x^{\d-\l})^2}{4x^\xi x^\mu}) e_\nu + x^\xi
e_\xi$. Возвращаясь к нашим изначальным обозначениям, получаем вектор $e_\l + a e_\rho +
(\tilde x^\tau + \frac{(\tilde x^{\d-\l})^2}{4ac}) e_\tau + c e_\s$. Осталось заметить, что
произведение коэффициентов, как мы отмечали в пункте~5,
остается неизменным. Так как, по выбору $a$, $b$, $c$ и $d$, $x^\l
x^\rho x^\s x^\tau = ab = cd$, то коэффициент при $e_\tau$ равен $\frac bc$, что
и требовалось.

Объединяя полученное с леммой 6, 1) и пунктом 11, получаем,
что два элемента $x = e_\l + x^\rho e_\rho + x^\s e_\s + x^\tau
e_\tau$ и $\tilde x = e_\l + \tilde x^\rho e_\rho + \tilde x^\s
e_\s + \tilde x^\tau e_\tau$ лежат в одной орбите тогда и только
тогда, когда $k := x^\rho x^\s x^\tau = \tilde x^\rho \tilde x^\s
\tilde x^\tau$ и существуют $x, y, x', y'\in K$, такие что
$x^2-ky^2=x^\rho \tilde x^\rho$ и $x'^2-ky'^2=x^\s \tilde x^\s$.

17. Полученные результаты можно объединить в теорему: 

\begin{theorem}

Пусть $\Phi=\D_4$ и $\Char K\neq 2$. Обозначим, для определенности,
$\l = \dynkindfour0010$, $\rho = \dynkindfour0111$, $\s =
\dynkindfour1011$ и $\tau = \dynkindfour1110$. Далее, пусть
$a\sim_k b \Leftrightarrow \exists x,y \in K : x^2-ky^2=ab$~---
отношение эквивалентности на $K^*$ и $K_k = K^*/\sim_k$~--
множество классов по этому отношению. Тогда векторы из $V_1$ под
действием $G_0$ образуют следующие орбиты:

\begin{enumerate}
\item[I.] Нулевая орбита, $x=0$.

\item[II.] Одна орбита из сингулярных векторов. Ее элементы можно
привести к виду $x=e_\l$.

\item[III.] Три серии орбит из блестящих векторов. Их элементы
можно привести к одному из следующих видов:
\begin{enumerate}
\item[a)] $x = e_\l + x^\rho e_\rho$; $x^\rho\in K^*/(K^*)^2$;

\item[б)] $x = e_\l + x^\s e_\s$; $x^\s\in K^*/(K^*)^2$;

\item[в)] $x = e_\l + x^\tau e_\tau$; $x^\tau\in K^*/(K^*)^2$.
\end{enumerate}

\item[IV.] Одна серия орбит из светящихся векторов. Их элементы
можно привести к виду $x = e_\l + x^\rho e_\rho + x^\s e_\s$;
$x^\rho, x^\s\in K^*/(K^*)^2$.

\item[V.] Одна серия орбит из темных векторов. Их элементы можно
привести к виду $x=e_\l + x^\rho e_\rho + x^\s e_\s + x^\tau
e_\tau$, где $k\in K^*$, $x^\rho, x^\s\in K_k$, $x^\tau = \frac
k{x^\rho x^\s}$.
\end{enumerate}

\end{theorem}

18. Пусть $\Phi=\D_l$, $l>4$. Этот случай, разумеется, похож на
$\Phi=\D_4$, но есть некоторые отличия. Здесь $\Phi_1$ состоит из
$4l-8$ векторов вида $\dynkindeight***\dots **1*$, $\Phi_0 =
\A_1\times \A_{l-2}$. В обозначениях из пункта~12 $\l =
\dynkindeight000\dots 0010$, $\rho = \dynkindeight112\dots2210$,
$\s = \dynkindeight000\dots0111$ и $\tau =
\dynkindeight112\dots2111$, $\A_1$ из разложения $\Phi_0$ совпадает с
$\{\pm(\d-\l-\rho)\}$. У нас снова остались первые пять типов:

\begin{enumerate}
\item[I.] $x=0$.

\item[II.] $x=x^\l e_\l$; $x^\l\neq 0$.

\item[III.] $x=x^\l e_\l + x^\mu e_\mu$; $x^\l, x^\mu\neq 0$.

\item[IV.] $x=x^\l e_\l + x^\mu e_\mu + x^\nu e_\nu$; $x^\l,
x^\mu, x^\nu\neq 0$.

\item[V.] $x=x^\l e_\l + x^\mu e_\mu + x^\nu e_\nu +x^\xi e_\xi$;
$x^\l, x^\mu, x^\nu, x^\xi\neq 0$.
\end{enumerate}

Как уже говорилось, в случае II вектор $x$ получается сингулярным,
в случае III~--- блестящим, в случае IV~--- светящимся, а в случае
V~--- темным. Кроме того, в случае V произведение всех
коэффициентов $x^\l x^\mu x^\nu x^\xi$ является неизменным. Это
означает, что элементы $x$ из разных типов точно лежат в различных
орбитах. Осталось понять, как обстоят дела внутри каждого типа.

По пункту 13 мы получаем, что при переходе к конкретным корням
тип III распадается на два подтипа.

\begin{enumerate}
\item[I.] $x=0$.

\item[II.] $x=x^\l e_\l$; $x^\l\neq 0$.

\item[III.]

\begin{enumerate}

\item[a)] $x=x^\l e_\l + x^\rho e_\rho$; $x^\l, x^\rho\neq 0$;

\item[б)] $x=x^\l e_\l + x^\s e_\s$; $x^\l, x^\s\neq 0$.
\end{enumerate}

\item[IV.] $x=x^\l e_\l + x^\rho e_\rho + x^\s e_\s$; $x^\l,
x^\rho, x^\s\neq 0$.

\item[V.] $x=x^\l e_\l + x^\rho e_\rho + x^\s e_\s +x^\tau
e_\tau$; $x^\l, x^\rho, x^\s, x^\tau\neq 0$.
\end{enumerate}

19. Пункт 15 выполняется и для случая $\Phi=\D_l$, $l>4$, то
есть из элемента $x=x^\l e_\l + x^\rho e_\rho + x^\s e_\s +x^\tau
e_\tau$ можно получить $x^\l {abc} e_\l + \frac {x^\rho a}{bc}
e_\rho + \frac{x^\s b}{ac} e_\s +\frac{x^\tau c}{ab} e_\tau$.
Однако в рассматриваемом случае появляются и другие возможные изменения. А именно, как
несложно видеть, корень $\a=\a_{l-3}$ ортогонален корням $\l$ и
$\rho$, а также образует угол $2\pi/3$ с $\s$ и $\pi/3$ с $\tau$;
поэтому при применении к $x$ элемента $w_{\a}(a)$ из леммы~4
получится $x^\l e_\l + x^\rho e_\rho + x^\s a e_\s +\frac
{x^\tau}a e_\tau$. Аналогично, корень $\b=\dynkindeight111\dots
1100$ будет ортогонален корням $\s$ и $\tau$, а также образовывать
угол $2\pi/3$ с $\l$ и $\pi/3$ с $\rho$; поэтому при применении к
$x$ элемента $w_{\b}(a)$ из леммы~4 получится $x^\l a e_\l +
\frac{x^\rho}a e_\rho + x^\s e_\s +x^\tau e_\tau$.

20 (18). Так как $\Phi_0 = \A_1\times \A_{l-2}$, то, как и для случая
$\D_4$, элементу $x$ можно сопоставить матрицу $z_1$,
соответствующую подсистеме $\A_1$. Как и прежде, если элементы $x,
x'\in V_1$ переходят друг в друга, то и соответствующие им
элементы $z_1, z'_1\in V_0$ тоже.

Наконец, заметим, что корни $\l, \rho, \s$ и $\tau$ порождают
подсистему типа $\D_4$, и для нее верны проделанные выше
рассуждения. То есть если элементы $x$ и $x'$ лежали в одной
орбите в случае $\Phi=\D_4$, то в случае $\Phi=\D_l$, $l>4$, они
тоже будут лежать в одной орбите. Наоборот, разумеется, это не
верно~--- различные орбиты из $\D_4$ могут "склеиться"{} в $\D_l$,
так как действующая группа там будет больше.

21. Рассмотрим классификацию из 18 пункта. С типом I все
понятно. В типе II по пункту 19 всего одна орбита, поэтому с
ним тоже нет никаких проблем. В подтипе III а) матрица $z_1$ имеет
ненулевой коэффициент над диагональю, то есть подпадает под
условие леммы 6, 2), а в III б) матрица $z_1$ нулевая, поэтому
элементы из этих подтипов лежат в разных орбитах. В III а), по
лемме 6, 2) и пункту 19, два вектора лежат в одной орбите
тогда и только тогда, когда произведение коэффициентов у них лежат
в одном классе $K^*/(K^*)^2$. Если при выборе представителя каждой
орбиты сделать $x^\l$ равным $1$, то оставшийся коэффициент можно
выбирать из $K^*/(K^*)^2$. Что касается III б), то по пункту
19, в нем всего одна орбита.

В типе IV матрица $z_1$ опять имеет ненулевой коэффициент над
диагональю, то есть подпадает под условие леммы 6, 2).
Соответственно, по лемме 6, 2) и пункту 19, если при выборе
представителя каждой орбиты сделать $x^\l$ равным $1$, то $x^\rho$
можно выбирать из $K^*/(K^*)^2$. При этом коэффициент $x^\s$, тоже
по пункту 19, можно сделать любым.

Осталось разобраться с типом V. Пользуясь, как мы говорили в
пункте 20, результатом для $\D_4$, мы получаем, что любой темный
вектор можно привести к виду $x=e_\l + x^\rho e_\rho + x^\s e_\s +
x^\tau e_\tau$, где $k\in K^*$, $x^\rho, x^\s\in K_k$, $x^\tau =
\frac k{x^\rho x^\s}$. По пункту 19 можно $x^\s$ сделать
произвольным ненулевым числом, меняя лишь коэффициент $x^\tau$; мы положим $x^\s$ равным $1$. Таким
образом, темный вектор можно привести к виду $x=e_\l + x^\rho
e_\rho + e_\s + x^\tau e_\tau$, где $k\in K^*$, $x^\rho\in K_k$,
$x^\tau = \frac k{x^\rho}$. Докажем, что все такие элементы лежат
в разных орбитах. Прежде всего напомним, что число $k$, равное
произведению всех коэффициентов, является инвариантом. Далее,
матрица $z_1$ подпадает под условие леммы  6, 1), причем
коэффициент над диагональю равен $N_{\l-\d,\d}N_{\l-\d,\rho}
x^\rho$, а число $k$ совпадает с также обозначенным числом из
леммы. Поэтому $x^\rho\in K_k$, что и требовалось.

22. Полученные результаты можно объединить в теорему: 

\begin{theorem}

Пусть $\Phi=\D_l$, $l>4$, и $\Char K\neq 2$. Обозначим, для
определенности, $\l = \dynkindeight000\dots 0010$, $\rho =
\dynkindeight112\dots2210$, $\s = \dynkindeight000\dots0111$ и
$\tau = \dynkindeight112\dots2111$. Далее, пусть $a\sim_k b
\Leftrightarrow \exists x,y \in K : x^2-ky^2=ab$~--- отношение
эквивалентности на $K^*$ и $K_k = K^*/\sim_k$~-- множество классов
по этому отношению. Тогда векторы из $V_1$ под действием $G_0$
образуют следующие орбиты:

\begin{enumerate}
\item[I.] Нулевая орбита, $x=0$.

\item[II.] Одна орбита из сингулярных векторов. Ее элементы можно
привести к виду $x=e_\l$.

\item[III.] Одна серия орбит и одна отдельная орбита из блестящих
векторов. Их элементы можно привести к одному из следующих видов:
\begin{enumerate}
\item[a)] $x = e_\l + x^\rho e_\rho$; $x^\rho\in K^*/(K^*)^2$;

\item[б)] $x = e_\l + e_\s$.
\end{enumerate}

\item[IV.] Одна серия орбит из светящихся векторов. Их элементы
можно привести к виду $x = e_\l + x^\rho e_\rho + e_\s$;
$x^\rho\in K^*/(K^*)^2$.

\item[V.] Одна серия орбит из темных векторов. Их элементы можно
привести к виду $x=e_\l + x^\rho e_\rho + e_\s + x^\tau e_\tau$,
где $k\in K^*$, $x^\rho\in K_k$, $x^\tau = \frac k{x^\rho}$.
\end{enumerate}

\end{theorem}

\begin{center}
{\bf Литература}
\end{center}

\begin{enumerate}

\item Борель А., {\it Свойства и линейные представления групп
Шевалле}, Семинар по алгебраическим группам, Мир, М., 1973, с.
9--59.

\item Бурбаки Н., {\it Группы и алгебры Ли}, Главы IV -- VI, Мир,
М., 1972.

\item Бурбаки Н., {\it Группы и алгебры Ли}, Главы VII -- VIII,
Мир, М., 1978.

\item Вавилов Н. А., {\it Как увидеть знаки структурных констант?}, Алгебра и анализ {\bf 19} (2007), №4, 34--68. 

\item Вавилов Н. А., Лузгарев А. Ю., {\it Нормализатор группы Шевалле типа $\E_7$}, Алгебра и анализ {\bf 27} (2015), №6, 57--88.

\item Вавилов Н. А., Лузгарев А. Ю., Певзнер И. М., {\it Группа
Шевалле типа $\E_6$ в $27$-мерном представлении}, Зап. научн.
семин. ПОМИ {\bf 338} (2006), 5--68.

\item Вавилов Н. А., Певзнер И. М., {\it Тройки длинных корневых
подгрупп}, Зап. научн. семин. ПОМИ {\bf 343} (2007), 54--83.

\item Вавилов Н. А., Семенов А. А., {\it Длинные корневые торы в
группах Шевалле}, Алгебра и анализ {\bf 24} (2012), №3, 22--83.

\item Лузгарев А. Ю., Певзнер И. М., {\it Некоторые факты из жизни
$\GL(5,\Z)$}, Зап. научн. семин. ПОМИ {\bf 305} (2003), 153--163.

\item О'Мира О., {\it Лекции о линейных группах}, Автоморфизмы
классических групп, Мир, М., 1976, с. 57--167.

\item О'Мира О., {\it Лекции о симплектических группах}, Мир, М.,
1979.

\item Певзнер И. М., {\it Геометрия корневых элементов в группах
типа $\E_6$}, Алгебра и анализ {\bf 23} (2011), №3, 261--309.

\item Певзнер И. М., {\it Ширина групп типа ${\E}_{6}$
относительно множества корневых элементов}, {\rm I}, Алгебра и
анализ {\bf 23} (2011), №5, 155--198.

\item Певзнер И. М., {\it Ширина групп типа ${\E}_{6}$
относительно множества корневых элементов}, {\rm II}, Зап. научн.
семин. ПОМИ {\bf 386} (2011), 242--264.

\item Певзнер И. М., {\it Ширина группы $\GL(6,K)$ относительно
множества квазикорневых элементов}, Зап. научн. семин. ПОМИ {\bf
423} (2014), 183--204.

\item Певзнер И. М., {\it Ширина экстраспециального унипотентного
радикала относительно множества корневых элементов}, Зап. научн.
семин. ПОМИ {\bf 435} (2015), 168--177.

\item Певзнер И. М., {\it Существование корневой подгруппы, которую данный элемент переводит в противоположную}, Зап. научн. семин. ПОМИ {\bf 460} (2017), 190--202.

\item Певзнер И. М., {\it Орбиты векторов некоторых представлений. {\rm I}}, Зап. научн. семин. ПОМИ {\bf 484} (2019), 149--164.

\item Спрингер Т. А., {\it Линейные алгебраические группы},
Алгебраическая геометрия -- 4, Итоги науки и техн. Сер. Соврем.
проблемы мат. Фундам. направления {\bf 55}, ВИНИТИ, М., 1989, с.
5--136.

\item Стейнберг Р., {\it Лекции о группах Шевалле}, Мир, М., 1975.

\item Хамфри Дж., {\it Линейные алгебраические группы}, Наука, М.,
1980.

\item Хамфри Дж., {\it Введение в теорию алгебр Ли и их
представлений}, МЦНМО, М., 2003.

\item Aschbacher M., {\it The 27-dimensional module for $E_6$. {\rm I}}, Invent. Math {\bf 89} (1987), no. 1, 159--195.

\item Aschbacher M., {\it The 27-dimensional module for $E_6$. {\rm II}}, J. London Math. Soc {\bf 37} (1988), 275--293.

\item Aschbacher M., {\it The 27-dimensional module for $E_6$. {\rm III}}, Trans. Amer. Math. Soc. {\bf 321} (1990), 45--84.

\item Aschbacher M., {\it The 27-dimensional module for $E_6$. {\rm IV}}, J. Algebra {\bf 131} (1990), 23--39.

\item Aschbacher M., {\it Some multilinear forms with large isometry groups}, Geom.\ Dedicata {\bf 25} (1988), no. 1--3, 417--465.

\item Aschbacher M., {\it The geometry of trilinear forms}, Finite Geometries, Buildings and Related
topics, Oxford: Oxford Univ. Press (1990), 75--84.

\item Cooperstein B. N., {\it The fifty-six-dimensional module for $E_7$. I. A four form for $E_7$}, J. Algebra {\bf 173} (1995), no. 2, 361--389.

\item Krutelevich S., {\it Jordan algebras, exceptional groups, and Bhargava composition}, J. Algebra {\bf 314} (2007), no. 2, 924--977.

\item Springer T. A., {\it Linear algebraic groups}, Progress in
Mathematics {\bf 9}, Birkh\"auser Boston Inc., Boston, 1998.

\item Tits J., {\it Sur les constantes de structure et le th{\' e}or{\` e}me d'existence des alg{\` e}bres de Lie semi-simples}, Inst. Hautes {\' E}tudes Sci. Publ. Math. No. 31 (1966), 21--58.

\item Vavilov N. A., {\it A third look at weight diagrams}, Rend. Sem. Mat. Univ. Padova {\bf 104} (2000), 201--250.

\item Vavilov N. A., Plotkin E. B., {\it Chevalley groups over commutative rings. I. Elementary calculations}, Acta Applicandae Math. {\bf 45} (1996), 73--115.

\end{enumerate}

\end{document}